\theoremstyle{plain}
 \newtheorem{theorem}{Theorem}[section]
 \newtheorem{proposition}[theorem]{Proposition}
 \newtheorem{lemma}[theorem]{Lemma}
 \newtheorem{question}[theorem]{Question}
 \newtheorem{open problem}[theorem]{Open problem}
 \newtheorem{proposed project}[theorem]{Proposed project}
\theoremstyle{definition}
 \newtheorem{definition}[theorem]{Definition}
\theoremstyle{remark}
 \newtheorem{remark}[theorem]{Remark}
 \numberwithin{equation}{section}
\title{The moduli space of the tropicalizations of Riemann surfaces}
\author{Dali Shen}
\date{\today}
\address{School of Mathematics, Tata Institute of Fundamental Research, Homi Bhabha Road, Mumbai 400005, India}
\email{dalishen@math.tifr.res.in}
\begin{document}

\setcounter{page}{1}
\pagenumbering{arabic}

\maketitle

\begin{abstract}
In this paper we study the moduli space of the tropicalizations of Riemann surfaces. 
We first tropicalize a smooth pointed Riemann surface by a graph defined by its (hyperbolic) pair of pants decomposition. 
Then we can construct the moduli space of tropicalizations based on a fixed regular tropicalization, and 
compactify it by adding strata parametrizing weighted contractions. 
We show that this compact moduli space is also Hausdorff. In the end, we compare this moduli space with the moduli 
space of Riemann surfaces, establishing a partial order-preserving correspondence between the 
stratifications of these two moduli spaces.
\end{abstract}

\tableofcontents

\section{Introduction}
We wish to construct the moduli space of the tropicalizations of Riemann surfaces with marked points in this paper,  
so that a partial order-preserving correspondence can be established between the stratifications of this moduli space 
and the moduli space of Riemann surfaces.

The tropical methods have already been used to study the moduli theory of algebraic curves during the past decade, 
via the help of non-Archimedean geometry. In \cite{Caporaso-2013}, Caporaso constructed the moduli space of tropical curves
with marked points. And later in \cite{Abramovich-Caporaso-Payne}, Abramovich, Caporaso and Payne identified this space 
with the skeleton of the moduli stack of stable curves, via the analytification of its coarse moduli space. Hence they 
established an order-reversing correspondence between the stratifications of the two moduli spaces. For their purpose 
they tropicalize an algebraic curve by its dual graph and interpret the edge-length as the ``complexity'' of the node. 
These work are also summarized in the expository papers \cite{Abramovich-2013} and \cite{Caporaso-2018}.

However, we want to establish an order-preserving correspondence between stratifications of some moduli space in a tropical 
sense and the moduli space of Riemann surfaces (or complex algebraic curves). 
In order to realize that, we tropicalize a smooth pointed Riemann surface 
by the graph defined by its pair of pants decomposition, and the edge-length is accordingly given by the length 
of the geodesic representative of the corresponding boundary component of the (hyperbolic) pair of pants. 
We denote this tropicalization by $(G,\ell)$. 
The underlying graph $G$ is always trivalent and we call 
such a topological tropicalization a regular tropicalization for a smooth pointed Riemann surface. Since a regular tropicalization 
is not unique, we always fix a regular tropicalization $G$ to start with for constructing our moduli space.
Now each tropicalization $(G,\ell)$ corresponds to a point in the open cone 
\[
\mathcal{C}(G):=\mathbb{R}_{>0}^{|E(G)|}
\]
if to each edge we associate a coordinate. Of course, the automorphism group $\mathrm{Aut}(G)$ of the graph 
induces an action on this cone by permuting the coordinates. So we construct the moduli space 
of tropicalizations $(G,\ell)$ as the following quotient space:
\[
M_{G}:=\mathcal{C}(G)/\mathrm{Aut}(G)=\mathbb{R}_{>0}^{|E(G)|}/\mathrm{Aut}(G)
=\mathbb{R}_{>0}^{3g-3+n}/\mathrm{Aut}(G)
\]
with the quotient topology.

Then a weighted contraction of $G$, for which the genus of the graph is preserved, is introduced to represent the boundary 
point of $\mathcal{C}(G)$. In this sense the regular tropicalization is viewed as a $\underline{0}$-weighted graph 
$(G,\underline{0})$, and those dependent notions are thus modified accordingly. In particular, 
the above moduli space $M_{G}$ will be rewritten as $M^{\mathrm{tr}}(G,\underline{0})$. 
We denote by $(G',w')\preceq (G,w)$ if $(G',w')$ is a weighted contraction of $(G,w)$. 
Without surprise, a weighted contraction of $(G,\underline{0})$ can be interpreted as the tropicalization of a 
nodal Riemann surface which is obtained by shrinking the corresponding geodesic representatives to a point. 
Hence we get a partially compactified space $\mathcal{C}(G,\underline{0})^{+}=\mathbb{R}_{\geq 0}^{|E(G)|}$ of the cone.

However, this space is still not compact since the edge-length is allowed to go to arbitrarily large. So we need to see what it 
would like to be when the edge-length goes to infinity. In fact, if we let the edge-length go to infinity, 
the Riemann surface in question becomes the normalization 
of the nodal Riemann surface obtained by letting the corresponding geodesic representatives go to zero. This fact connects the 
two tropicalizations corresponding respectively to the edge-length going to infinity or zero. 
And on the other hand, the normalized 
Riemann surface is not connected or of the same genus any more for which we do not want to tropicalize. 
So that we can identify $\infty$ to $0$ for each coordinate in order to construct our moduli space.

Therefore, we extend the action of $\mathrm{Aut}(G,\underline{0})$ to the compactified space 
$\overline{\mathcal{C}(G,\underline{0})}=(\mathbb{S}^{1})^{|E(G)|}$ in such a way that 
the quotient space 
\[
\overline{M^{\mathrm{tr}}(G,\underline{0})}:=\overline{\mathcal{C}(G,\underline{0})}/\mathrm{Aut}(G,\underline{0})=
(\mathbb{S}^{1})^{|E(G)|}/\mathrm{Aut}(G,\underline{0})
\]
identifies isomorphic tropicalizations. 
This quotient space endowed with the quotient topology 
is our desired compactified moduli space for $M^{\mathrm{tr}}(G,\underline{0})$.

Then we arrive at the first main result of this paper, some properties of this moduli space.

\begin{theorem}[Theorem \ref{thm:hausdorff}]
	Let $(G,\underline{0})$ be a regular tropicalization of a smooth $n$-pointed genus $g$ Riemann surface. Then we have 
	\begin{enumerate}
		\item There is a stratification for $\overline{M^{\mathrm{tr}}(G,\underline{0})}$ as follows
		\[
		\overline{M^{\mathrm{tr}}(G,\underline{0})}=\bigsqcup_{(G',w') \preceq (G,\underline{0})}M^{\mathrm{tr}}(G',w'),
		\]
		where $M^{\mathrm{tr}}(G,\underline{0})$ is open and dense 
		in $\overline{M^{\mathrm{tr}}(G,\underline{0})}$. 
		
		\item $\overline{M^{\mathrm{tr}}(G,\underline{0})}$ is compact and Hausdorff as a topological space.
	\end{enumerate}
\end{theorem}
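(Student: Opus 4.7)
The plan is to derive both statements from the product structure of $\overline{\mathcal{C}(G,\underline{0})}=(\mathbb{S}^{1})^{|E(G)|}$ together with the fact that $\mathrm{Aut}(G,\underline{0})$ is a finite group. First I would work upstairs in the compact torus: for each subset $S\subseteq E(G)$, set
\[
C_{S}:=\{t\in(\mathbb{S}^{1})^{|E(G)|}\mid t_{e} \text{ equals the identified point } 0\sim\infty \text{ iff } e\in S\},
\]
which gives a locally closed decomposition $(\mathbb{S}^{1})^{|E(G)|}=\bigsqcup_{S}C_{S}$ with each $C_{S}$ homeomorphic to $\mathbb{R}_{>0}^{|E(G)\setminus S|}$, and with $C_{\emptyset}=\mathcal{C}(G,\underline{0})$ open and dense. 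Contracting exactly the edges in $S$ (converting any resulting loop into one unit of genus-weight on the surviving vertex, so that the total genus is preserved) produces a weighted contraction $(G_{S},w_{S})\preceq(G,\underline{0})$, and the identification $C_{S}\cong \mathcal{C}(G_{S},w_{S})$ is tautological from the coordinate interpretation.

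Next I would push the decomposition through the quotient. The action of $\sigma\in\mathrm{Aut}(G,\underline{0})$ preserves the stratification via $\sigma\cdot C_{S}=C_{\sigma(S)}$, and one checks that two subsets $S,S'\subseteq E(G)$ lie in the same $\mathrm{Aut}(G,\underline{0})$-orbit if and only if the induced weighted contractions $(G_{S},w_{S})$ and $(G_{S'},w_{S'})$ are isomorphic. Hence the $\mathrm{Aut}(G,\underline{0})$-orbits of subsets of $E(G)$ are in bijection with isomorphism classes of weighted contractions $(G',w')\preceq(G,\underline{0})$, and the quotient inherits the claimed stratification, with $M^{\mathrm{tr}}(G,\underline{0})$ (the image of $C_{\emptyset}$) open and dense.

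Part (2) splits cleanly. Compactness is immediate: $(\mathbb{S}^{1})^{|E(G)|}$ is compact, and the continuous image of a compact space is compact. For Hausdorffness I would invoke the standard fact that a finite group acting by homeomorphisms on a Hausdorff space yields a Hausdorff quotient. Concretely, given $[x]\neq[y]$, their orbits $\mathrm{Aut}(G,\underline{0})\cdot x$ and $\mathrm{Aut}(G,\underline{0})\cdot y$ are finite disjoint subsets of the Hausdorff space $(\mathbb{S}^{1})^{|E(G)|}$ and so admit disjoint open neighborhoods $U,V$; replacing them by $U':=\bigcap_{\sigma}\sigma U$ and $V':=\bigcap_{\sigma}\sigma V$ yields $\mathrm{Aut}(G,\underline{0})$-invariant, still disjoint, open neighborhoods, whose images in the quotient separate $[x]$ from $[y]$.

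The main obstacle I anticipate is the bijective correspondence in the second paragraph: one must verify that the $\mathrm{Aut}(G,\underline{0})$-orbits of subsets of $E(G)$ really do index isomorphism classes of weighted contractions without over- or under-counting, and that the identification $\infty\sim 0$ on each coordinate is compatible with the weighted-contraction relation. The motivation given in the introduction—that letting an edge length diverge corresponds, via normalization, to the situation in which that edge has length zero—must be translated into a clean combinatorial statement about strata; keeping this bookkeeping consistent throughout the full action of $\mathrm{Aut}(G,\underline{0})$ is where the conceptual work of the theorem lies, while both the compactness and Hausdorff assertions are then formal consequences.
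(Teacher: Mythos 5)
Your stratification, density, and compactness arguments match the paper's (the paper derives the theorem by applying Proposition \ref{prop:decomposition}, whose proof uses exactly the decomposition $\overline{\mathcal{C}(G,w)}=\bigsqcup_{Q\subset E(G)}\mathcal{C}(G_{/Q},w_{/Q})$ and the fact that a continuous image of a compact space is compact). The gap is in your Hausdorffness argument, and it is exactly the point you flag as ``the main obstacle'' but then do not resolve. The space $\overline{M^{\mathrm{tr}}(G,\underline{0})}$ is \emph{not} the orbit space of the finite group $\mathrm{Aut}(G,\underline{0})$ acting by homeomorphisms on $(\mathbb{S}^{1})^{|E(G)|}$: as the paper spells out in Remark \ref{rem:two-types} and in the proof of Proposition \ref{prop:decomposition}, the equivalence relation is generated by (i) the action of the \emph{full} group $\mathrm{Aut}(G_{/Q},w_{/Q})$ on each boundary face $F_{Q}=\mathcal{C}(G_{/Q},w_{/Q})$, and (ii) isometries $\phi_{f}\colon F_{Q}\to F_{Q'}$ whenever the two contractions are abstractly isomorphic. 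Neither of these is in general induced by an automorphism of $G$. Your claimed equivalence --- that $S$ and $S'$ lie in the same $\mathrm{Aut}(G,\underline{0})$-orbit if and only if $(G_{S},w_{S})\cong(G_{S'},w_{S'})$ --- would collapse (ii) into the group action, but it is false in general, and even when it holds the face stabilizer need not surject onto $\mathrm{Aut}(G_{/Q},w_{/Q})$. Concretely: take the trivalent genus-$3$ graph with vertices $v_{1},\dots,v_{4}$, a loop at $v_{1}$, an edge $a=v_{1}v_{2}$, edges $b=v_{2}v_{3}$, $c=v_{2}v_{4}$, and a double edge $d_{1},d_{2}$ between $v_{3}$ and $v_{4}$. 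Contracting $b$ merges $v_{2},v_{3}$ and produces a triple edge $\{c,d_{1},d_{2}\}$, whose automorphism group permutes these three coordinates by $S_{3}$; but the stabilizer of $b$ in $\mathrm{Aut}(G)$ only realizes the transposition $d_{1}\leftrightarrow d_{2}$. So points of $F_{\{b\}}$ that must be identified in $M^{\mathrm{tr}}(G_{/b},w_{/b})$ are not identified in the naive group quotient. Your argument therefore proves Hausdorffness of a different (finer) quotient, and your stratum over $F_{\{b\}}$ would be a nontrivial finite cover of $M^{\mathrm{tr}}(G_{/b},w_{/b})$ rather than the stratum asserted in part (1).

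Because the extra identifications are only partial maps defined on closed faces (extended by the identity off $\overline{F_{Q}}$, hence not homeomorphisms of the torus and not forming a group), the standard ``finite group acting on a Hausdorff space'' lemma does not apply. The paper's fix is a direct metric argument: given $\overline{u}\neq\overline{v}$ with finite fibers $\{u_{i}\}$, $\{v_{j}\}$ (Lemma \ref{lem:finite-fibers}), choose $\varepsilon$ so small that the balls $B(u_{i},\varepsilon)$, $B(v_{j},\varepsilon)$ are pairwise disjoint and meet a closed face $\overline{F_{Q}}$ only if that face contains the corresponding center; one then checks by hand that $U=\bigcup_{i}B(u_{i},\varepsilon)$ and $V=\bigcup_{j}B(v_{j},\varepsilon)$ are saturated for both types of identification, so their images separate $\overline{u}$ from $\overline{v}$. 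You would need to replace your last paragraph with an argument of this kind (or prove that the full equivalence relation is closed in the product and that $\overline{\pi}$ is open), rather than invoking the group-quotient lemma.
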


The tropicalization of a nodal Riemann surface can be realized as a weighted contraction of a regular tropicalization 
(i.e., a trivalent genus $g$ graph with $n$ leaves), via its normalization. 
In view of this we have as well a stratification for the moduli space of Riemann surfaces $\overline{M}_{g,n}$ as follows:
\[
\overline{M}_{g,n}=\bigsqcup_{\big(\text{$(G'_{\preceq G},w')$, $G$ trivalent, genus $g$, $n$ leaves}\big)/\sim}
M^{\mathrm{rs}}(G'_{\preceq G},w'),
\]
where $M^{\mathrm{rs}}(G'_{\preceq G},w')$ denotes the locus in 
$\overline{M}_{g,n}$ of those nodal Riemann surfaces obtained by shrinking those geodesic representatives, corresponding to 
contracted edges in $(G,\underline{0})$ to get $(G',w')$, to a point; 
and $(G'_{\preceq G_{1}},w')\sim(G''_{\preceq G_{2}},w'')$ 
if they support the Riemann surfaces of the same topological type.

Although the weighted contractions of a fixed 
regular tropicalization can not exhaust all the topological types of nodal Riemann surfaces, 
we still have the following partial partition analogy between $\overline{M^{\mathrm{tr}}(G,\underline{0})}$ and $\overline{M}_{g,n}$.

\begin{theorem}[Theorem \ref{thm:analogy}]
	Fix a regular tropicalization $(G,\underline{0})$ of an $n$-pointed genus $g$ Riemann surface. Let $(G',w')$ be a weighted contraction of 
	it. Then the association as below
	\[
	M^{\mathrm{tr}}(G',w')\mapsto M^{\mathrm{rs}}(G'_{\preceq G},w')
	\]
	gives a map from the stratification of $\overline{M^{\mathrm{tr}}(G,\underline{0})}$ to the stratification of $\overline{M}_{g,n}$.
	And we have 
	\begin{enumerate}
		\item $\dim M^{\mathrm{tr}}(G',w')=\dim M^{\mathrm{rs}}(G'_{\preceq G},w')=|E(G')|$.
		
		\item Suppose $(G'',w'')\preceq (G',w')$, then we have $M^{\mathrm{tr}}(G'',w'')\subset  \overline{M^{\mathrm{tr}}(G',w')}$ 
		and $M^{\mathrm{rs}}(G''_{\preceq G},w'')\subset  \overline{M^{\mathrm{rs}}(G'_{\preceq G},w')}$.
	\end{enumerate}
\end{theorem}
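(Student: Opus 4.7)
The plan is to verify three things in sequence: that the assignment $M^{\mathrm{tr}}(G',w') \mapsto M^{\mathrm{rs}}(G'_{\preceq G},w')$ is a well-defined map between the two stratifications, that the claimed dimension formula holds on both sides, and that the contraction order is respected under closure.

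First I would address well-definedness. By Theorem~1.1(1), the strata of $\overline{M^{\mathrm{tr}}(G,\underline{0})}$ are indexed exactly by weighted contractions $(G',w') \preceq (G,\underline{0})$. On the Riemann surface side, $M^{\mathrm{rs}}(G'_{\preceq G},w')$ is by construction the locus in $\overline{M}_{g,n}$ of nodal Riemann surfaces obtained from smooth surfaces with pants decomposition $G$ by shrinking precisely those geodesic representatives whose edges are contracted in passing from $(G,\underline{0})$ to $(G',w')$. The resulting topological type has dual graph $(G',w')$, so the map is well-defined and lands in the usual Deligne--Mumford boundary stratification of $\overline{M}_{g,n}$ by topological types.

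Next I would compute the dimensions. On the tropical side, $M^{\mathrm{tr}}(G',w') = \mathbb{R}_{>0}^{|E(G')|}/\mathrm{Aut}(G',w')$ is a quotient by a finite group, hence has dimension $|E(G')|$. On the Riemann surface side, since $G$ is trivalent with $n$ leaves and genus $g$, one has $|E(G)|=3g-3+n$; the weighted contraction $(G',w')$ collapses $3g-3+n-|E(G')|$ edges, each producing a node. Hence the boundary stratum of $\overline{M}_{g,n}$ indexed by the stable graph $(G',w')$ has complex dimension $3g-3+n$ minus the number of nodes, namely $|E(G')|$.

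Finally I would verify the two closure relations in (2). The inclusion $M^{\mathrm{tr}}(G'',w'') \subset \overline{M^{\mathrm{tr}}(G',w')}$ follows directly from Theorem~1.1(1): if $(G'',w'') \preceq (G',w')$ then the corresponding stratum appears on the part of the topological boundary of $M^{\mathrm{tr}}(G',w')$ obtained by letting the additional edge-length coordinates tend to zero in the quotient topology. The geometric counterpart $M^{\mathrm{rs}}(G''_{\preceq G},w'') \subset \overline{M^{\mathrm{rs}}(G'_{\preceq G},w')}$ follows from the standard pinching deformation of hyperbolic surfaces, together with the compatibility between Fenchel--Nielsen-type length coordinates and the Deligne--Mumford compactification: continuously shrinking the additional pants curves produces, in the limit, the nodal surfaces of $M^{\mathrm{rs}}(G''_{\preceq G},w'')$. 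The main obstacle is to ensure, in this last step, that the geometric locus really lies in the closure of $M^{\mathrm{rs}}(G'_{\preceq G},w')$ and not merely in the deeper stratum of $\overline{M}_{g,n}$ indexed abstractly by $(G'',w'')$. The key observation making this routine is that the pants decomposition encoded by $G$ restricts to a pants decomposition on each smooth component of the nodal surfaces in $M^{\mathrm{rs}}(G'_{\preceq G},w')$, so the further pinching dictated by the contraction $(G',w') \to (G'',w'')$ is well-defined on that locus and lands, by continuity of the pinching parameter, inside $M^{\mathrm{rs}}(G''_{\preceq G},w'')$.
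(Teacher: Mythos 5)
Your treatment of part (1) and of the tropical half of part (2) is essentially the paper's: the tropical dimension is read off from the cone $\mathcal{C}(G',w')$, and $M^{\mathrm{tr}}(G'',w'')\subset\overline{M^{\mathrm{tr}}(G',w')}$ is quoted from the stratification result (Proposition \ref{prop:decomposition}). Two caveats on (1). The paper does not cite a Deligne--Mumford stratum dimension; it proves in Lemma \ref{lem:irre-and-dim} that $M^{\mathrm{rs}}(G'_{\preceq G},w')$ is irreducible quasi-projective of dimension $|E(G')|$ by exhibiting it as the finite image of $M_{g_1,n_1+d_1}\times\cdots\times M_{g_k,n_k+d_k}$ under normalization. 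Your count ``$3g-3+n$ minus the number of nodes'' yields the same number, so the arithmetic stands, but your assertion that the limiting topological type ``has dual graph $(G',w')$'' is false: in this paper the edges of $(G',w')$ are the \emph{surviving} pants curves, while the edges of the dual graph are the \emph{nodes}; the two graphs have $|E(G')|$ and $3g-3+n-|E(G')|$ edges respectively. Keeping these apart is the whole point of the order-preserving (rather than order-reversing) correspondence.

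The genuine gap is in the Riemann-surface half of (2). You argue by pinching: surfaces in $M^{\mathrm{rs}}(G'_{\preceq G},w')$ degenerate to points of $M^{\mathrm{rs}}(G''_{\preceq G},w'')$. That shows certain limits of $M^{\mathrm{rs}}(G'_{\preceq G},w')$ lie in $M^{\mathrm{rs}}(G''_{\preceq G},w'')$, i.e.\ an inclusion of the set of constructed limits into the deeper locus --- but the claim is the reverse containment: \emph{every} point of $M^{\mathrm{rs}}(G''_{\preceq G},w'')$ must be a limit of points of $M^{\mathrm{rs}}(G'_{\preceq G},w')$. The ``main obstacle'' you isolate (that your limits land in the correct locus rather than merely in the abstract deeper stratum) is the easy direction; the surjectivity of the pinching construction onto $M^{\mathrm{rs}}(G''_{\preceq G},w'')$ is nowhere addressed, so the stated containment is not established. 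The paper sidesteps this by running the argument the other way: it starts from an \emph{arbitrary} $(C,\underline{p})\in M^{\mathrm{rs}}(G''_{\preceq G},w'')$ and simultaneously smooths all the nodes corresponding to $Q$, where $(G'',w'')=(G'_{/Q},w'_{/Q})$, using the local model $(z_1,z_2)\mapsto z_1z_2$, producing a one-parameter family with general fiber in $M^{\mathrm{rs}}(G'_{\preceq G},w')$ and special fiber $(C,\underline{p})$. To repair your proof, either reverse the direction in this way, or supply a surjectivity argument for the pinching (e.g.\ via Fenchel--Nielsen coordinates on the components, or via the irreducibility and dimension count of Lemma \ref{lem:irre-and-dim}); as written, the key inclusion $M^{\mathrm{rs}}(G''_{\preceq G},w'')\subset\overline{M^{\mathrm{rs}}(G'_{\preceq G},w')}$ does not follow.
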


Note that, unfortunately, the above association in the theorem is neither surjective nor injective.

Since our tropicalization is essentially a tropical curve, we largely make use of the techniques on the moduli of tropical 
curves (or equivalently, weighted metric graphs) which are well elaborated in \cite{Caporaso-2013} and \cite{Caporaso-2018}, 
except for some suitable modifications for our situation.

The paper is organized as follows. In Section $\ref{sec:tropicalization}$ we tropicalize a smooth $n$-pointed Riemann surface 
through its (hyperbolic) pair of pants decomposition. In Section $\ref{sec:moduli}$ we construct our moduli space 
of tropicalizations based on a fixed regular tropicalization, and then comapctify it by adding strata parametrizing weighted 
contractions. We can also show some basic properties of this space. In the last section, Section $\ref{sec:comparison}$, 
we compare this space with the moduli space of Riemann surfaces, trying to establish some correspondence 
between stratifications of these two moduli spaces.

\section{Tropicalization by pants decompositions}\label{sec:tropicalization}

This section is devoted to giving a brief introduction to the tropicalization of a Riemann surface with marked points. 
We tropicalize a pointed Riemann surface by using its pair of pants decomposition. In fact, a pants decomposition of 
a closed topological surface implies some important information on the moduli space of Riemann surfaces supported on it, 
e.g., its parameters, dimension and so on.  
A good reference on Riemann surfaces and 
their moduli is Looijenga's notes \cite{Looijenga-2017}, as well as a much bigger and comprehensive volume of 
Arbarello-Cornalba-Griffiths \cite{Arbarello-Cornalba-Griffiths}.

Let $S$ be a closed connected topological surface of genus $g$. It is well-known that a conformal structure with an orientation 
endowed on $S$ turns $S$ into a Riemann surface. Indeed, it coincides with the classical definition of a Riemann surface: the surface 
$S$ admits an atlas of complex charts with holomorphic transition functions. That is because a conformal structure on $S$ with 
an orientation defines a complex structure $J_{p}$ on each tangent space $T_{p}S$ of $S$, and a Riemann metric is locally 
conformal to a Euclidean metric so that the coordinate changes of this atlas are holomorphic.
Therefore, it is not surprising that we can describe the variation of complex structures on $S$ in terms of the 
variation of conformal structures on it, up to some equivalence encoded by a group action. 

Let $P$ be a closed subset of $S$, not necessarily finite, but can be empty. Then all the diffeomorphisms of $S$ leaving 
$P$ pointwise fixed form a group, denoted by $\mathrm{Diff}(S,P)$. We write its identity component as $\mathrm{Diff}^{\circ}(S,P)$, 
which is a normal subgroup of $\mathrm{Diff}(S,P)$. Likewise, those orientation preserving diffeomorphisms of $\mathrm{Diff}(S,P)$ 
form a group as well, denoted by $\mathrm{Diff}^{+}(S,P)$, also containing $\mathrm{Diff}^{\circ}(S,P)$ as a normal subgroup.
Then the \emph{mapping class group} of $(S,P)$ is defined as the quotient 
$\mathrm{Mod}(S,P):=\mathrm{Diff}^{+}(S,P)/\mathrm{Diff}^{\circ}(S,P)$, endowed with the discrete topology.

To an embedded circle $\alpha\subset S\backslash P$ one can associate an element of $\mathrm{Mod}(S,P)$, called \emph{Dehn twist} 
and denoted by $D_{\alpha}$, as follows. Embedding the circle $\mathbb{S}^{1}$ into $\mathbb{C}$ 
such that $|s|=1$ for any $s\in \mathbb{S}^{1}$ 
and thus an orientation on $\mathbb{S}^{1}$ is given accordingly, let 
$\phi: (-1,1)\times \mathbb{S}^{1}\rightarrow S\backslash P$ be an orientation preserving open embedding such that 
$\phi(0,\mathbb{S}^{1})=\alpha$, let $\theta: (-1,1)\rightarrow [0,2\pi]$ be a smooth monotone function taking values constant $0$ 
on $(-1,-\frac{1}{2})$ and constant $2\pi$ on $(\frac{1}{2},1)$. Define a mapping $h:S\rightarrow S$ such that 
$h(\phi(t,u))=\phi(t,ue^{-\sqrt{-1}\theta(t)})$ and identity on $S\backslash \phi((-1,1)\times \mathbb{S}^{1})$. 
It is obvious that $h$ is a differmorphism and one can check its equivalence class in $\mathrm{Mod}(S,P)$ is only 
determined by the isotopy class of $\alpha$ relative to $P$. 

Such an embedded circle $\alpha$ is called a \emph{nonseparating curve} if its complement $S\backslash\alpha$ is connected. 
Then the complement becomes the interior of a compact connected surface of genus $g-1$ with two boundary components. 
Otherwise it is called a \emph{separating curve}. This circle divides $S$ into two connected components, written as $S'$ and $S''$, 
each of which is the interior of a compact surface with $\alpha$ as its boundary. If the genera of $S'$ and $S''$ are $g'$ and $g''$ 
respectively, then we have $g=g'+g''$. And the set $P$ of marked points is also divided into two subsets: 
$P':=P\cap S'$ and $P'':=P\cap S''$, lying on $S'$ and $S''$ respectively.

The following theorem is obtained by Dehn and Lickorish.

\begin{theorem}[Dehn-Lickorish]
Let $S$ and $P$ be given as above. Then the mapping class group $\mathrm{Mod}(S,P)$ of $(S,P)$ is generated by 
finitely many Dehn twists.
\end{theorem}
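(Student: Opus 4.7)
The plan is to proceed by induction on the genus $g$ following Lickorish's classical strategy. The first step is to establish a \emph{change of coordinates} principle for nonseparating curves: given any two nonseparating simple closed curves $\alpha,\beta\subset S\setminus P$, there is a finite product $T$ of Dehn twists along curves in $S\setminus P$ so that $T(\alpha)$ is isotopic (rel $P$) to $\beta$. The mechanism is the elementary identity $D_{\gamma_{2}}D_{\gamma_{1}}(\gamma_{1}) = \gamma_{2}$ whenever $\gamma_{1},\gamma_{2}$ meet transversally in one point, applied to a \emph{chain} of nonseparating curves from $\alpha$ to $\beta$ in which consecutive members meet once. Such a chain exists by a standard connectedness argument for the graph whose vertices are isotopy classes of nonseparating curves and whose edges are pairs meeting once.

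Next, given any $f\in\mathrm{Mod}(S,P)$, pick a fixed nonseparating curve $\alpha$ and apply the first step to find a product $T$ of Dehn twists with $Tf(\alpha)$ isotopic to $\alpha$. Replacing $f$ by $Tf$, one may assume $f$ fixes the isotopy class of $\alpha$. Cutting along $\alpha$ produces a surface $\widetilde{S}$ of genus $g-1$ with two boundary circles, and the stabilizer of $\alpha$ in $\mathrm{Mod}(S,P)$ fits into an exact sequence involving $\mathrm{Mod}(\widetilde{S},P\cup\partial\widetilde{S})$, together with the twist $D_{\alpha}$ and a possible involution swapping the two boundary components. By the inductive hypothesis the cut-surface mapping class group is finitely generated by Dehn twists, and each such generator lifts to a Dehn twist on $S$ disjoint from $\alpha$.

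The base cases are the sphere and the torus (together with small configurations of marked points). For the sphere $\mathrm{Mod}(S^{2},P)$ is generated by half-twists, each of which is expressible via Dehn twists, and the torus gives $\mathrm{Mod}(T^{2})\cong SL_{2}(\mathbb{Z})$, generated by two Dehn twists along a meridian and a longitude. Marked points are handled by an auxiliary induction using the Birman-type exact sequence that relates $\mathrm{Mod}(S,P)$ to $\mathrm{Mod}(S,P\setminus\{p\})$ by adjoining a point-pushing map, which is itself a product of two Dehn twists along parallel curves bounding an annular neighbourhood of a path through $p$.

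The main obstacle is the bookkeeping of marked points and boundary through the induction: one has to guarantee that generators of the stabilizer of $\alpha$ on $\widetilde{S}$ still descend to Dehn twists on $(S,P)$, that the twist $D_{\alpha}$ and the boundary-swap contribute only finitely many further generators, and that the Birman sequence contributes only finitely many point-pushing generators at each stage. Once these are controlled, assembling the induction yields a finite generating set of Dehn twists; Lickorish's explicit count of $3g-1$ curves for the closed case can be recovered by choosing the chain of curves in the first step economically.
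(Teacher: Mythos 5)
The paper does not prove this statement: it is quoted as a classical theorem of Dehn and Lickorish and used as background, so there is no in-paper argument to compare yours against. Your outline is the standard proof (essentially the one in Farb--Margalit, \emph{A Primer on Mapping Class Groups}), and as a roadmap it is sound: induction on genus via the stabilizer of a nonseparating curve, the identity $D_{\gamma_2}D_{\gamma_1}(\gamma_1)=\gamma_2$ for curves meeting once, the Birman exact sequence to strip marked points, and the sphere/torus base cases are exactly the right ingredients. Be aware, though, that as written it is a sketch rather than a proof: the connectivity of the graph of nonseparating curves with edges given by pairs intersecting once is itself a nontrivial lemma that holds for $g\geq 2$ and requires separate treatment in genus $1$; the point-pushing map is $D_{\alpha_1}D_{\alpha_2}^{-1}$ along the two boundary curves of an annular neighbourhood of a simple \emph{loop} through $p$ (not a path, and with one inverse twist); and the exact-sequence bookkeeping for the cut surface (orientation of $\alpha$, the boundary swap, capping the boundary circles) is precisely where the work lies, which your final paragraph acknowledges but does not carry out. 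For the purposes of this paper a citation to the literature is the appropriate ``proof,'' and your sketch is consistent with that literature.
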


Recall that a conformal structure on a 2-dimensional real vector space is an inner product on it given up to a scalar 
multiplication. Then the spaces $\mathrm{Conf}(T_{p}S)$, for each $p\in S$ the set of conformal structures on the tangent space $T_{p}S$, 
form a disk bundle over $S$, denoted by $\mathrm{Conf}(S)$. A smooth section of this bundle is called a \emph{conformal structure} 
on $S$.

From now on we will always assume $P$ is finite or empty. The group $\mathrm{Diff}^{+}(S,P)$ acts on $\mathrm{Conf}(S)$. Then 
the \emph{Teichm\"uller space} of $(S,P)$ is defined as the space of conformal structures on $S$ given up to isotopy relative 
to $P$:
\[ T(S,P):=\mathrm{Diff}^{\circ}(S,P)\backslash\mathrm{Conf}(S), \]
which can also be written as $T_{g,P}$ since now the genus is enough to characterize the underlying surface. 
Likewise, we can also define another orbit space, the space of conformal structures on $S$ given up to orientation-preserving 
differmorphisms relative to $P$:
\begin{align*}
M_{g,P}=M(S,P):&=\mathrm{Diff}^{+}(S,P)\backslash\mathrm{Conf}(S) \\
&=\mathrm{Mod}(S,P)\backslash T(S,P).
\end{align*}
In fact, this is the \emph{moduli space} of Riemann surfaces of genus $g$ with $P$-marked. 
If the elements of $P$ are ordered: $P=\{p_{1},p_{2},\cdots,
p_{n}\}$, then we can write $T_{g,P}$ and $M_{g,P}$ as $T_{g,n}$ and $M_{g,n}$ respectively.

A \emph{pair of pants} is a compact surface with boundary of genus zero such that its boundary has three connected components. 
If it is endowed with a hyperbolic structure for which the three boundary components are geodesics, then it is called a 
\emph{hyperbolic pair of pants}. Since it is homeomorphic to a sphere with three holes, its Euler characteristic is 
$2-2g-n=2-0-3=-1$.

Let $S^{\circ}:=S\backslash P$. From now on we will assume $2-2g-n<0$ throughout, or equivalently, the Euler characteristic of 
$S^{\circ}$ is negative. This means if a conformal structure $J$ is given on $S$, then its restriction to $S^{\circ}$ 
underlies a complete hyperbolic metric, making any element of $P$ to be a cusp. Therefore, $S^{\circ}$ can be identified 
with a quotient of the upper half plane $\mathbb{H}$, i.e., 
$\Gamma\backslash\mathbb{H}$ with some $\Gamma\subset\mathrm{PSL}(2,\mathbb{R})$.

Then a pants decomposition of $S^{\circ}$ is given as follows.
\begin{definition}
	A \emph{pants decomposition} of $S^{\circ}$ is a closed one dimensional submanifold $A\subset S$ which does not meet $P$ 
	given up to isotopy relative to $P$ (so every connected component of $A$ is an embedded circle) such that every connected 
	component of $S^{\circ}\backslash A$ is differmorphic to the interior of a pair of pants.
\end{definition}
Now the boundary of $S^{\circ}\backslash A$ is either a connected component of $A$ or a singleton in $P$. One can then easily 
encode the topological type of the triple $(S,P;A)$ by a connected graph $G(S,P;A)$: the vertex set 
$V(S,P;A)=\pi_{0}(S^{\circ}\backslash A)$, the edge set 
$Edge(S,P;A)=\pi_{0}(A)\cup P$ with the obvious incidence relation. Those edges indexed by $\pi_{0}(A)$ are called \emph{interior edges}: 
the separating curve defines an edge connecting two vertices, while the nonseparating curve defines a loop at a vertex. 
Likewise, those edges indexed by $P$ are called the \emph{exterior edges}: they look like `leaves' which emanates from one vertex.
\begin{proposition}\label{prop:trivalent-graph}
	The graph $G(S,P;A)$ is trivalent (i.e., any vertex is of degree $3$), has $2g-2+n$ vertices and $3g-3+n$ interior edges.
\end{proposition}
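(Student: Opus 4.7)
The plan is to verify the three claims (trivalence, $V = 2g-2+n$, and $E_{\mathrm{int}} = 3g-3+n$) in order, with the first being immediate from the definitions and the other two following from Euler characteristic bookkeeping plus a handshake lemma.

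First I would check trivalence directly from the construction. Each vertex of $G(S,P;A)$ is a connected component of $S^{\circ}\setminus A$, which by definition is diffeomorphic to the interior of a pair of pants and hence has exactly three ``ends'' — each corresponding either to a boundary circle (a connected component of $A$) or to a puncture (an element of $P$). The incidence relation attaches an interior edge for every boundary-circle end (shared with at most one other vertex, or with itself in the nonseparating case) and an exterior edge (leaf) for every puncture end. Thus every vertex has degree exactly $3$.

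Next I would count vertices via Euler characteristics. Each open pair of pants has Euler characteristic $\chi=2-2(0)-3=-1$. Since $A$ is a disjoint union of smoothly embedded circles in $S^{\circ}$, a tubular neighborhood argument (or Mayer--Vietoris with annular neighborhoods, all pieces having $\chi=0$) shows $\chi(S^{\circ}\setminus A)=\chi(S^{\circ})=2-2g-n$. By additivity over connected components, $\chi(S^{\circ}\setminus A)=-|V(S,P;A)|$, so
\[
|V(S,P;A)| \;=\; 2g-2+n.
\]
Then I would deduce the number of interior edges from the handshake lemma: the sum of vertex degrees equals $2\,|E_{\mathrm{int}}|+|E_{\mathrm{ext}}|$ (each interior edge contributes $2$ to the total degree, counting loops correctly, and each leaf contributes $1$), while trivalence gives this sum as $3|V|$. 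With $|E_{\mathrm{ext}}|=|P|=n$, solving
\[
3(2g-2+n) \;=\; 2\,|E_{\mathrm{int}}| + n
\]
yields $|E_{\mathrm{int}}|=3g-3+n$, as claimed.

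The potentially subtle step is the Euler characteristic identity $\chi(S^{\circ}\setminus A)=\chi(S^{\circ})$: one must confirm that the removal of finitely many disjoint circles from a surface does not change $\chi$, which is precisely why the tubular neighborhood / Mayer--Vietoris detour is needed rather than naive additivity. Everything else is formal once trivalence is in place, and the hypothesis $2-2g-n<0$ (built into the assumption that a pants decomposition exists) ensures all counts are nonnegative.
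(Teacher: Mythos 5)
Your proposal is correct and follows essentially the same route as the paper: trivalence from the three boundary components of a pair of pants, the vertex count from additivity of the Euler characteristic using $\chi(S^{\circ})=2-2g-n$, $\chi(\text{pair of pants})=-1$, $\chi(\text{circle})=0$, and the interior-edge count from the half-edge (handshake) computation $3(2g-2+n)=2|E_{\mathrm{int}}|+n$. Your extra care in justifying $\chi(S^{\circ}\setminus A)=\chi(S^{\circ})$ via a tubular-neighborhood argument is a welcome refinement of a step the paper takes for granted, but it is not a different method.
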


\begin{proof}
Each vertex of the graph $G(S,P;A)$ is indexed by the interior of a pair of pants, which is homeomorphic to a thrice 
punctured sphere. Since it has three connected boundary components, it is clear that the graph is trivalent. 

We already know that the Euler characteristic of $S^{\circ}$, a pair of pants and a circle is $2-2g-n$, $-1$ and $0$ 
respectively. So by the additive property of Euler characteristic we can deduce that there are $2g-2+n$ connected components 
of $S^{\circ}\backslash A$ if $A$ induces a pants decomposition of $S^{\circ}$, i.e., there are $2g-2+n$ vertices in $G(S,P;A)$. 
Since the graph is trivalent, there are $3(2g-2+n)=6g-6+3n$ half edges where the interior edges are counted twice and the exterior 
edges are counted once. The number of the exterior edges is just $n$, i.e., the number of the elements of $P$. So there are 
$(6g-6+3n-n)/2=3g-3+n$ interior edges in $G(S,P;A)$.
\end{proof}

\begin{remark}
It should be noted that the pants decomposition for a given $S^{\circ}$ is not unique, which can be seen from the following picture.
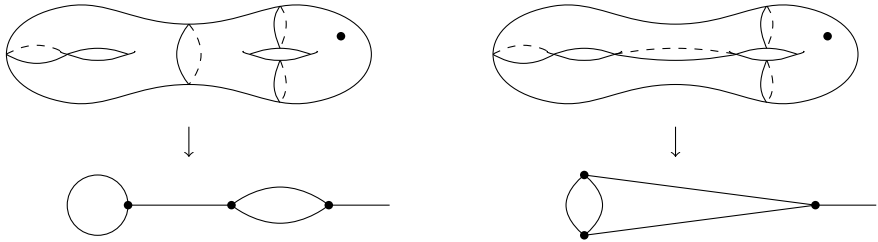
\begin{figure}[h]
\centering
\begin{tikzpicture}[scale=0.8]
\draw  plot [smooth cycle, tension=0.8] coordinates {(0,0) (1,0.8) (3,0.5) (5,0.8) (6,0) (5,-0.8) (3,-0.5) (1,-0.8)};
\draw  plot [smooth, tension=1] coordinates {(1,0) (1.5,0.1) (2,0)};
\draw  plot [smooth, tension=1] coordinates {(0.9,0.05) (1,0) (1.5,-0.1) (2,0) (2.1,0.05)};
\draw  plot [smooth, tension=1] coordinates {(4,0) (4.5,0.1) (5,0)};
\draw  plot [smooth, tension=1] coordinates {(3.9,0.05) (4,0) (4.5,-0.1) (5,0) (5.1,0.05)};
\fill (5.5,0.3) circle[radius=2pt];
\draw  [dashed] plot [smooth, tension=1] coordinates {(0,0) (0.5,0.15) (1,0)};
\draw  plot [smooth, tension=1] coordinates {(0,0) (0.5,-0.15) (1,0)};
\draw  [dashed] plot [smooth, tension=1] coordinates {(3,0.5) (3.2,0) (3,-0.5)};
\draw  plot [smooth, tension=1] coordinates {(3,0.5) (2.8,0) (3,-0.5)};
\draw  [dashed] plot [smooth, tension=1] coordinates {(4.5,0.8) (4.6,0.5) (4.5,0.1)};
\draw  plot [smooth, tension=1] coordinates {(4.5,0.8) (4.4,0.5) (4.5,0.1)};
\draw  [dashed] plot [smooth, tension=1] coordinates {(4.5,-0.1) (4.6,-0.5) (4.5,-0.8)};
\draw  plot [smooth, tension=1] coordinates {(4.5,-0.1) (4.4,-0.5) (4.5,-0.8)};

\draw  plot [smooth cycle, tension=0.8] coordinates {(8,0) (9,0.8) (11,0.5) (13,0.8) (14,0) (13,-0.8) (11,-0.5) (9,-0.8)};
\draw  plot [smooth, tension=1] coordinates {(9,0) (9.5,0.1) (10,0)};
\draw  plot [smooth, tension=1] coordinates {(8.9,0.05) (9,0) (9.5,-0.1) (10,0) (10.1,0.05)};
\draw  plot [smooth, tension=1] coordinates {(12,0) (12.5,0.1) (13,0)};
\draw  plot [smooth, tension=1] coordinates {(11.9,0.05) (12,0) (12.5,-0.1) (13,0) (13.1,0.05)};
\fill (13.5,0.3) circle[radius=2pt];
\draw  [dashed] plot [smooth, tension=1] coordinates {(8,0) (8.5,0.15) (9,0)};
\draw  plot [smooth, tension=1] coordinates {(8,0) (8.5,-0.15) (9,0)};
\draw  [dashed] plot [smooth, tension=1] coordinates {(10,0) (11,0.1) (12,0)};
\draw  plot [smooth, tension=1] coordinates {(10,0) (11,-0.1) (12,0)};
\draw  [dashed] plot [smooth, tension=1] coordinates {(12.5,0.8) (12.6,0.5) (12.5,0.1)};
\draw  plot [smooth, tension=1] coordinates {(12.5,0.8) (12.4,0.5) (12.5,0.1)};
\draw  [dashed] plot [smooth, tension=1] coordinates {(12.5,-0.1) (12.6,-0.5) (12.5,-0.8)};
\draw  plot [smooth, tension=1] coordinates {(12.5,-0.1) (12.4,-0.5) (12.5,-0.8)};

\fill (2,-2.5) circle[radius=2pt];
\fill (3.7,-2.5) circle[radius=2pt];
\fill (5.3,-2.5) circle[radius=2pt];
\draw  plot [smooth cycle, tension=1] coordinates {(2,-2.5) (1.5,-2) (1,-2.5) (1.5,-3)};
\draw  plot [smooth, tension=1] coordinates {(2,-2.5) (3.7,-2.5)};
\draw  plot [smooth, tension=1] coordinates {(3.7,-2.5) (4.5,-2.2) (5.3,-2.5)};
\draw  plot [smooth, tension=1] coordinates {(3.7,-2.5) (4.5,-2.8) (5.3,-2.5)};
\draw  plot [smooth, tension=1] coordinates {(5.3,-2.5) (6.3,-2.5)};

\fill (9.5,-2) circle[radius=2pt];
\fill (9.5,-3) circle[radius=2pt];
\fill (13.3,-2.5) circle[radius=2pt];
\draw  plot [smooth, tension=1] coordinates {(9.5,-2) (9.2,-2.5) (9.5,-3)};
\draw  plot [smooth, tension=1] coordinates {(9.5,-2) (9.8,-2.5) (9.5,-3)};
\draw  plot [smooth, tension=1] coordinates {(9.5,-2) (13.3,-2.5)};
\draw  plot [smooth, tension=1] coordinates {(9.5,-3) (13.3,-2.5)};
\draw  plot [smooth, tension=1] coordinates {(13.3,-2.5) (14.3,-2.5)};

\draw [->] (3,-1.2) to (3,-1.7);
\draw [->] (11,-1.2) to (11,-1.7);

\end{tikzpicture}
\caption{Two pants decompositions for a genus $2$ surface with $1$ marked point}
\end{figure}
\end{remark}

Let us assume that a conformal structure $J$ is given on $S^{\circ}$ and a pants decomposition induced by $A$ is fixed.
As discussed before, it underlies a natural hyperbolic structure on $S^{\circ}$. 
In fact, there is a unique closed geodesic representing each connected component $\alpha$ of $A$, called a 
\emph{geodesic representative}. And these representative geodesics are disjoint. Therefore, a length is defined by the 
conformal structure $J$ on the geodesic representative of $\alpha$, denoted by $\ell_{\alpha}(J)$. Since the length is 
invariant up to the isotopy class of the conformal structure we have a well-defined function as follows
\[
\ell_{\alpha}: T(S,P)\rightarrow\mathbb{R}_{>0}; \; [J]\mapsto \ell_{\alpha}(J).
\]
We know from hyperbolic geometry that a hyperbolic pair of pants up to isometry is determined by the lengths of its 
three boundary components. So if a pants decomposition for $S^{\circ}$ is fixed and the lengths of those geodesic representatives 
are given, then each hyperbolic pair of pants is determined accordingly up to isometry. However, from these data one still can not 
recover $(S,J)$. That is because even though we know that which pair of boundary components should be welded onto each other 
from the decomposition, there still exists a rotation over an angle for two $\mathbb{S}^{1}$ to be welded onto each other. 
This causes a so-called \emph{geodesic shearing} action along the geodesic representative of the connected component $\alpha$ 
of $A$ so that two conformal structures may differ by a geodesic shearing along the geodesic representative. Therefore, 
the above discussion 
essentially shows that $T(S,P)$ locally looks like $\mathbb{R}^{6g-6+2n}$ as a topological space 
(length function resp. geodesic shearing provides a coordinate for each 
geodesic representative and there are $3g-3+n$ interior edges for the decomposition).

Now we are ready to present the so-called \emph{Fenchel-Nielson parametrization} for the Teichm\"uller space $T(S,P)$. 
\begin{theorem}[Fenchel-Nielson parametrization]
The Fenchel-Nielson parametrization, given as above by the length function resp. geodesic shearing for each 
geodesic representative, defines a global homeomorphism from $(\mathbb{R}_{>0}\times\mathbb{R})^{\pi_{0}(A)}$ to 
$T(S,P)$. In fact, the action of geodesic shears $\mathbb{R}^{\pi_{0}(A)}$ gives rise to a 
principal fibration for $T(S,P)$ which can be illustrated by $\ell:T(S,P)\rightarrow \mathbb{R}_{>0}^{\pi_{0}(A)}$.

Moreover, the Dehn twists around the connected components of $A$ generate a free abelian subgroup of $\mathrm{Mod}(S,P)$ of 
rank $|\pi_{0}(A)|$.
\end{theorem}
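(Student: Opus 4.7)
The plan is to establish the homeomorphism by constructing explicit inverse maps. For the forward map $\Psi:(\mathbb{R}_{>0}\times\mathbb{R})^{\pi_{0}(A)}\to T(S,P)$, I would invoke the rigidity of hyperbolic pairs of pants already cited in the text: given a tuple of lengths $(\ell_{\alpha})$, for each vertex $v$ of the graph $G(S,P;A)$ there is a unique (up to isometry) hyperbolic pair of pants whose boundary geodesics have the lengths prescribed by the three edges incident to $v$. These pants are then glued pairwise along matching geodesic boundaries, introducing a shearing displacement determined by the twist parameter $\tau_{\alpha}\in\mathbb{R}$ at each interior edge, and completing the cusps at the exterior edges to obtain a complete finite-area hyperbolic metric on $S^{\circ}$; its conformal class defines a point in $T(S,P)$.

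For the reverse map $\Phi:T(S,P)\to(\mathbb{R}_{>0}\times\mathbb{R})^{\pi_{0}(A)}$, one uses that each conformal class underlies a unique hyperbolic metric on $S^{\circ}$, and that each $\alpha\in\pi_{0}(A)$ admits a unique disjoint closed geodesic representative, as recalled in the excerpt. The length of this representative provides $\ell_{\alpha}$, and, after fixing once and for all a family of transverse reference arcs on the topological surface $(S,P;A)$, the signed shearing displacement along the geodesic representative gives $\tau_{\alpha}$. The compositions $\Phi\circ\Psi$ and $\Psi\circ\Phi$ reduce to the identity by construction: in the first case, the glued pants re-read back their own boundary lengths and shears; in the second, cutting $(S,[J])$ along the geodesic representatives decomposes $S^{\circ}$ into isometric copies of the hyperbolic pants recovered by $\Phi$.

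To upgrade bijectivity to a homeomorphism, continuity of $\Psi$ is immediate from the continuity of the gluing construction in both length and twist parameters. Continuity of $\Phi$ reduces to the continuous dependence of the geodesic representative on the underlying hyperbolic metric (uniform on compact subsets of $\mathrm{Conf}(S)/\mathrm{Diff}^{\circ}(S,P)$), together with the continuity of arc-length and displacement functionals along those representatives. The principal fibration assertion is then essentially tautological: the shear group $\mathbb{R}^{\pi_{0}(A)}$ acts on $T(S,P)$ by translation in the $\tau$-coordinates while fixing the $\ell$-coordinates, so $\ell:T(S,P)\to\mathbb{R}_{>0}^{\pi_{0}(A)}$ is a trivial principal $\mathbb{R}^{\pi_{0}(A)}$-bundle via $\Psi$.

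For the final Dehn-twist statement, one verifies that $D_{\alpha}$ acts on Fenchel-Nielsen coordinates by $\tau_{\alpha}\mapsto\tau_{\alpha}+\ell_{\alpha}$ and identity on all other coordinates, using the explicit local description of $D_{\alpha}$ given in the text applied to a collar neighborhood of the geodesic representative of $\alpha$. Because disjoint Dehn twists commute and because the induced shifts are independent on distinct coordinates, the subgroup of $\mathrm{Mod}(S,P)$ generated by $\{D_{\alpha}\}_{\alpha\in\pi_{0}(A)}$ acts freely on $T(S,P)$ with orbits isomorphic to $\mathbb{Z}^{\pi_{0}(A)}$, hence is free abelian of rank $|\pi_{0}(A)|$. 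The main obstacle is the continuity of $\Phi$: one must rule out jumps or degenerations of geodesic representatives under small perturbations of the conformal structure, which is handled by the collar lemma together with a compactness argument bounding the geodesic length and location away from the cusps and away from other components of $A$.
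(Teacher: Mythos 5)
The paper does not actually prove this statement: it is quoted as the classical Fenchel--Nielsen theorem, with the surrounding discussion (rigidity of hyperbolic pairs of pants, the shearing ambiguity in the welding) serving only as motivation, and the reader is implicitly referred to \cite{Looijenga-2017} and \cite{Arbarello-Cornalba-Griffiths}. So there is no in-paper argument to compare against; your proposal has to stand on its own. What you write is the standard construction --- glue rigid hyperbolic pants along matching boundary geodesics with a twist displacement, read the parameters back off from geodesic lengths and displacements measured against fixed reference arcs --- and the overall architecture is right.

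The one genuine gap is your claim that $\Phi\circ\Psi$ and $\Psi\circ\Phi$ ``reduce to the identity by construction.'' Injectivity in the length coordinates is indeed immediate from pants rigidity, but injectivity in the twist coordinates is the entire content of the theorem: twisting by a full period $\ell_{\alpha}$ produces an \emph{isometric} hyperbolic surface, so no purely metric measurement along the geodesic can distinguish $\tau_{\alpha}$ from $\tau_{\alpha}+\ell_{\alpha}$. One must show that the \emph{marking} (equivalently, the isotopy class rel $P$ of the image of your reference arc) records the integer part of the twist, i.e.\ that the twist flow is a proper $\mathbb{R}$-action on $T(S,P)$ rather than descending to a circle action. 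This is usually done by lifting to the universal cover, or by exhibiting a curve $\beta$ crossing $\alpha$ whose geodesic length is a proper (convex, by Kerckhoff) function of $\tau_{\alpha}$. Note also that as written your argument is circular at exactly this point: you deduce freeness of the Dehn-twist action from the formula $D_{\alpha}:\tau_{\alpha}\mapsto\tau_{\alpha}+\ell_{\alpha}$ in Fenchel--Nielsen coordinates, but the validity of those coordinates (injectivity of $\Psi$) is logically equivalent to that very freeness. You need an independent argument for one of the two; once either is in place, the rest of your outline --- continuity of $\Psi$ from the gluing construction, continuity of $\Phi$ via the collar lemma, the tautological principal-bundle structure --- goes through as you describe.
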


\begin{remark}
We note that the Fenchel-Nielson parametrization a prior depends on the pants decomposition. But the Fenchel-Nielson parametrizations 
associated to different pants decompositions are differentiable in terms of each other. So the (smooth) manifold 
structure is independent of the 
pants decomposition. In particular, $T(S,P)$ even has a complex manifold structure.
\end{remark}

Now for each smooth Riemann surface supported on $S$ with $P$-marked, we associate to each interior edge of $G(S,P;A)$ 
the length $\ell_{\alpha}(J)$ of its corresponding geodesic representative, to each exterior edge of $G(S,P;A)$ a 
length $\infty$. By this operation the graph $G$ is made to be a metrized graph, denoted by $(G,\ell)$. 
We say that the metric graph $(G,\ell)$ 
gives a \emph{tropicalization} of the smooth Riemann surface with $P$-marked. 
Sometimes we call the graph $G(S,P;A)$ a 
\emph{topological tropicalization} of $S$ if we do not want to consider its metric structure.

Conversely, it is clear that for a given interior edge of $G$, i.e., corresponding to a geodesic representative, 
any length taking value in 
$\mathbb{R}_{>0}$ can be realized by a Riemann surface supported on $(S,P)$ leaving other edge-lengths fixed. This means 
that the graph $G$ endowed with an arbitrary length function $\ell$ on its interior edges can always 
be tropicalized from a Riemann surface. 

\begin{remark}
It is not surprising that the process of tropicalization discards some information, in this case, on the geodesic shears.
\end{remark}
	
\section{Moduli of tropicalizations}\label{sec:moduli}

In this section we first construct the moduli space of the tropicalizations of smooth $n$-pointed genus $g$ 
Riemann surfaces, 
as a topological space. And then we compactify it by adding strata which can be interpreted as parametrizing 
the tropicalizations of nodal $n$-pointed Riemann surfaces. 
We then show that the compactified space is also Hausdorff.

\subsection{The moduli space of tropicalizations}

Now let us fix a pants decomposition $A$ for $S^{\circ}$. We are thus given the tropicalization, $(G,\ell)$, of a smooth $n$-pointed 
Riemann surface supported on $S$. 
From now on we will assume that the elements in $P$ are ordered. 

The genus of a graph is defined as its first Betti number
\[
g(G):=b_{1}(G)=\dim_{\mathbb{Z}}H_{1}(G,\mathbb{Z})=|E(G)|-|V(G)|+|\{\text{connected components}\}|
\]
where $E(G)$ does not count leaves. This suggests us to distinguish the interior edge from the exterior edge, so we will 
call an interior edge just an edge, an exterior edge a \emph{leaf} in what follows. The sets of interior edges and 
exterior edges are denoted by $E(G)$ and $L(G)$ respectively.

Then we can show that the genus of $G$ is just $g$, the genus of the smooth Riemann surface from which it is tropicalized.
\begin{proposition}
The genus of the tropicalization $(G,\ell)$ of the smooth $n$-pointed Riemann surface supported on $S$ is just $g$.
\end{proposition}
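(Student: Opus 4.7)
The plan is to reduce the statement to a direct numerical computation using the data already established in Proposition \ref{prop:trivalent-graph}. Recall that the genus of $G$ is defined by
\[
g(G) = b_{1}(G) = |E(G)| - |V(G)| + |\{\text{connected components of } G\}|,
\]
where, by the convention fixed just above the statement, $E(G)$ counts only interior edges.

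First I would observe that $G = G(S,P;A)$ is connected. This follows immediately from the fact that $S^{\circ}$ is connected: any two pairs of pants in the decomposition can be joined by a path in $S^{\circ}$, which, after being put in general position with respect to $A$, produces a walk in $G$ between the corresponding vertices. Hence the last term in the formula for $b_{1}(G)$ equals $1$.

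Next I would simply substitute the numerics coming from Proposition \ref{prop:trivalent-graph}, namely $|V(G)| = 2g - 2 + n$ and $|E(G)| = 3g - 3 + n$, to obtain
\[
g(G) = (3g - 3 + n) - (2g - 2 + n) + 1 = g.
\]
Since the metric $\ell$ plays no role in the definition of $b_{1}(G)$, the conclusion for $(G,\ell)$ follows.

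There is no real obstacle here: the content of the statement is entirely captured by Proposition \ref{prop:trivalent-graph} together with the connectedness of $G$, and the only thing worth being careful about is the bookkeeping convention that leaves do not contribute to $E(G)$ and hence do not contribute to $b_{1}(G)$, which matches the expectation that marked points should not alter the topological genus.
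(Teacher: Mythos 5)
Your proposal is correct and follows essentially the same route as the paper: substitute $|V(G)|=2g-2+n$ and $|E(G)|=3g-3+n$ from Proposition \ref{prop:trivalent-graph} into the first Betti number formula, using that $G$ is connected. Your extra justification of connectedness via paths in $S^{\circ}$ is a minor elaboration the paper leaves implicit.
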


\begin{proof}
We directly apply the above formula.
\begin{align*}
g(G)&=|E(G)|-|V(G)|+|\{\text{connected components}\}| \\
&=(3g-3+n)-(2g-2+n)+1 \\
&=g.
\end{align*}
The number of vertices and edges of $G$ are already obtained in Proposition $\ref{prop:trivalent-graph}$, and 
the number of connected components is just $1$ since $G$ is connected.
\end{proof}

Conversely, if we are given a trivalent genus $g$ connected graph with $n$ leaves, we can recover a genus $g$ surface with $n$ 
marked points as follows. We first associate to each vertex a pair of pants and to a half edge or a leaf which is incident 
to the vertex a boundary component, then we weld the corresponding boundary components if there is an edge 
connecting two vertices. In the meantime those boundary components corresponding to leaves are shrunk to a marked point. A topological 
surface is thus recovered. That the surface is still of genus 
$g$ is due to the above proposition because the given graph is a tropicalization of the recovered surface. It is clear that 
the surface has $n$ marked points since they are recovered from the $n$ leaves.

In order to construct the moduli space of those tropicalizations of $n$-pointed smooth Riemann surfaces supported on $S$, denoted by 
$M_{G}$, it is not surprising to introduce the open cone 
\[
\mathcal{C}(G):=\mathbb{R}_{>0}^{|E(G)|}=\mathbb{R}_{>0}^{3g-3+n}
\]
endowed with the usual topology. To every point $(\ell_{1},\dots,\ell_{|E(G)|})$ in the cone there corresponds a 
unique metric graph $(G,\ell)$ for which the length of its $i$-th edge is $\ell_{i}$.

But we notice that the symmetry of the graph would cause that some distinct graphs, in the sense of corresponding to 
distinct points in the cone, 
are isomorphic to each other. 
So before we can go further, we need to introduce the automorphism group of a graph first. For that we define 
a so-called \emph{endpoint} map
\[
\epsilon: E(G)\cup L(G)\rightarrow V(G)
\]
which assigns an edge (or a leaf) its endpoints. The image of a non-loop edge under this map thus should be 
understood as a value of unordered two vertices.

\begin{definition}
A map $f:V(G)\cup E(G)\cup L(G)\rightarrow V(G')\cup E(G')\cup L(G')$ is called a morphism from graph $G$ to graph $G'$ 
if we have $f(L(G))\subset L(G')$, and the diagram below is commutative.
\[
\begin{tikzpicture}[scale=2]
\node (A) at (0,1) {$V(G)\cup E(G)\cup L(G)$};
\node (B) at (3,1) {$V(G')\cup E(G')\cup L(G')$};
\node (C) at (0,0) {$V(G)\cup E(G)\cup L(G)$};
\node (D) at (3,0) {$V(G')\cup E(G')\cup L(G')$};
\draw
(A) edge[->,>=angle 90]   node[above] {$f$}    (B)
(A) edge[->,>=angle 90]   node[right] {$(id_{V},\epsilon)$}      (C)
(B) edge[->,>=angle 90]   node[right] {$(id_{V'},\epsilon')$}     (D)
(C) edge[->,>=angle 90]   node[above] {$f$}     (D);
\end{tikzpicture}
\]

The morphism $f$ is an isomorphism if it induces, by restriction, three bijections 
$f_{V}:V(G)\rightarrow V(G')$, $f_{E}:E(G)\rightarrow E(G')$ and $f_{L}:L(G)\rightarrow L(G')$. 
Then an automorphism of $G$ is an isomorphism between $G$ and itself.
\end{definition}


Besides the notion of an automorphism of a graph, there is also a common type of graph morphism, called \emph{(edge) contraction}, 
playing an important role in our paper. Let $Q\subset E(G)$ be a set of edges, we denote by $G_{/Q}$ the graph removing all the 
edges in $Q$ and identifying the endpoints of each edge in $Q$. Then we have a resulting map $f:G\rightarrow G_{/Q}$ 
by fixing anything outside of $Q$, called \emph{(edge) contraction}. On the other hand, 
let $E_{1}:=E(G)\backslash Q$, we can also obtain a graph 
$G-E_{1}$ by just removing all the edges of $E_{1}$. Then every connected component of $G-E_{1}$ can be contracted to a vertex of 
$G_{/Q}$; conversely, the preimage $f^{-1}(v')\subset G$ for every vertex $v'\in V(G_{/Q})$ is a connected component of 
$G-E_{1}$. In particular, we have 
\begin{align}\label{eqn:contraction-of-b1}
b_{1}(G-E_{1})=\sum_{v'\in V(G_{/Q})}b_{1}(f^{-1}(v')).
\end{align}
By the additivity of $b_{1}$ we have that 
\begin{align}\label{eqn:additivity-of-b1}
b_{1}(G)=b_{1}(G_{/Q})+b_{1}(G-E_{1}).
\end{align}

It is clear that all the automorphisms of a graph $G$ form a group, denoted by $\mathrm{Aut}(G)$. By passing from 
$f$ to $f_{E}$ it induces a homomorphism from $\mathrm{Aut}(G)$ to the symmetry group on $|E(G)|$ edges. Hence $\mathrm{Aut}(G)$ 
acts on the open cone $\mathbb{R}_{>0}^{|E|}$ by permuting the coordinates. Therefore, we construct the moduli space 
of tropicalizations $(G,\ell)$ with $G$ underlying the given pants decomposition:
\[
M_{G}:=\mathcal{C}(G)/\mathrm{Aut}(G)=\mathbb{R}_{>0}^{|E(G)|}/\mathrm{Aut}(G)=\mathbb{R}_{>0}^{3g-3+n}/\mathrm{Aut}(G)
\]
with the quotient topology.

\begin{remark}
By passing from $f$ to $f_{E}$ one may lose some non-trivial elements while fixing the edge set $E(G)$, for instance, 
an element which is not identity on $V(G)$ but fixing the edge set. 
That means $\mathrm{Aut}(G)$ may contain non-trivial elements acting 
trivially on $\mathcal{C}(G)$.
\end{remark}

\subsection{Weighted contractions}

Now we want to add the boundary strata to the open cone $\mathcal{C}(G)$ in order to get the closed cone 
$\mathcal{C}(G)^{+}=\mathbb{R}_{\geq 0}^{|E(G)|}$. That means some edges of the metric graph $(G,\ell)$ would go to zero, 
leading to the contraction of $G$. 
Since the length of each edge of the graph is defined as the length of its corresponding geodesic representative of $\alpha\in A$, 
the length going to zero amounts to shrinking the corresponding geodesic representative until we get a node. By this way we get a 
nodal Riemann surface, and we say that the contracted graph $G'$ is a tropicalization of this nodal Riemann surface.

\begin{remark}
Shrinking a closed curve $\alpha$ in a smooth Riemann surface leads to a nodal Riemann surface. 
But the tropicalization of the nodal Riemann surface a prior depends on the tropicalization $G$ of the 
smooth one, hence it is not unique as well. 
On the other hand, if we start from a tropicalization $G$ for a smooth one, unfortunately 
the various contractions of $G$ 
would not exhaust all the nodal Riemann surfaces of arithmetic genus $g$. 
This point of view will become more clear in the next section when 
we compare this moduli space with the moduli space of Riemann surfaces.
\end{remark}

We know that those nodal Riemann surfaces, obtained by shrinking some $\alpha$'s from a smooth one, still have 
arithmetic genus $g$. For a graph, however, after contracting an edge, its genus might decrease. So we introduce 
a weighted version for these notions, provided by Brannetti-Melo-Viviani \cite{Brannetti-Melo-Viviani}, 
so as to remedy this problem.

\begin{definition}
A graph $G$ is called a \emph{weighted graph} if it is endowed with a weight function on 
the vertices $w:V(G)\rightarrow \mathbb{Z}_{\geq 0}$, 
denoted by $(G,w)$. We can think of an unweighted graph as a $\underline{0}$-weighted graph $(G,\underline{0})$.

A \emph{weighted metric graph} $(G,w,\ell)$ is defined likewise if we start from a metric graph $(G,\ell)$.
\end{definition}

Then the genus of a weighted graph is modified by adding weights of vertices to the first Betti number:
\[
g(G,w):=b_{1}(G)+\sum_{v\in V(G)}w(v).
\] 
Therefore, the contraction of a weighted graph can be naturally modified by setting the weight of each vertex of the contracted 
graph as the genus of its weighted preimage.

\begin{definition}\label{def:weight-of-vertex}
Let $Q\subset E(G)$ be a set of edges of a weighted graph $(G,w)$. A contraction of $Q$ in $(G,w)$ is called a 
\emph{weighted contraction} if the weight function $w_{/Q}$ on the contracted graph $G_{/Q}$ is defined as 
follows:
\[
w_{/Q}(v'):=b_{1}(f^{-1}(v'))+\sum_{v\in f_{V}^{-1}(v')}w(v)
\]
for each vertex $v'\in V(G_{/Q})$.
\end{definition}
We immediately notice that the weighted contraction would not change the genus of a weighted graph. 

\begin{proposition}
Using the notations above, we have 
\[
g(G_{/Q},w_{/Q})=g(G,w).
\]
\end{proposition}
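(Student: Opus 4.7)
The plan is to simply unwind the definition of $g(G_{/Q},w_{/Q})$ and substitute in the two additivity identities for $b_1$ that were recorded immediately before the statement, namely equations $(2.1)$ and $(2.2)$. Since the weight function on the contracted graph is designed precisely so that the Betti numbers of the fibers are reabsorbed into the weights, the verification should be purely mechanical.

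More concretely, I would start by writing
\[
g(G_{/Q},w_{/Q}) = b_1(G_{/Q}) + \sum_{v' \in V(G_{/Q})} w_{/Q}(v'),
\]
and then substitute the defining formula for $w_{/Q}(v')$ from Definition \ref{def:weight-of-vertex}. This gives two sums indexed by $v' \in V(G_{/Q})$: one over the Betti numbers $b_1(f^{-1}(v'))$ of the fibers, and one over the weights $w(v)$ for $v \in f_V^{-1}(v')$.

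The first sum collapses to $b_1(G - E_1)$ by equation $(2.1)$, since the fibers $f^{-1}(v')$ are exactly the connected components of $G - E_1$. The second sum, since $\{f_V^{-1}(v')\}_{v' \in V(G_{/Q})}$ partitions $V(G)$, reassembles into $\sum_{v \in V(G)} w(v)$. Combining these,
\[
g(G_{/Q},w_{/Q}) = b_1(G_{/Q}) + b_1(G - E_1) + \sum_{v \in V(G)} w(v),
\]
and now equation $(2.2)$ lets me replace $b_1(G_{/Q}) + b_1(G - E_1)$ by $b_1(G)$, yielding $g(G,w)$.

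There is no real obstacle here: the proof is a definition-chasing exercise, and the only thing one must be careful about is keeping the two additivity statements straight — $(2.1)$ expresses $b_1$ of the subgraph $G - E_1$ as the sum over the components (which are the fibers of $f_V$), while $(2.2)$ expresses $b_1(G)$ as the sum of $b_1$ of the contraction and $b_1$ of that subgraph. Once these are correctly invoked, the identity $g(G_{/Q},w_{/Q}) = g(G,w)$ drops out in one line.
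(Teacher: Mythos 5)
Your proof is correct and is essentially the paper's own argument run in reverse: the paper starts from $g(G,w)$ and expands via (\ref{eqn:additivity-of-b1}), (\ref{eqn:contraction-of-b1}) and Definition \ref{def:weight-of-vertex} to reach $g(G_{/Q},w_{/Q})$, while you unwind $g(G_{/Q},w_{/Q})$ and collapse back to $g(G,w)$ using the same two identities and the same partition of $V(G)$ by the vertex fibers. No difference in substance.
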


\begin{proof}
Let $Q$	and $E_{1}$ be defined as above. Then we have 
\begin{align*}
g(G,w)&=b_{1}(G)+\sum_{v\in V(G)}w(v)  \\
&=b_{1}(G_{/Q})+b_{1}(G-E_{1})+\sum_{v\in V(G)}w(v)  \hskip 2.2cm \text{by (\ref{eqn:additivity-of-b1})} \\
&=b_{1}(G_{/Q})+\sum_{v'\in V(G_{/Q})}b_{1}(f^{-1}(v'))+\sum_{v\in V(G)}w(v)  \hskip 0.7cm \text{by (\ref{eqn:contraction-of-b1})}  \\
&=b_{1}(G_{/Q})+\sum_{v'\in V(G_{/Q})}b_{1}(f^{-1}(v'))+\sum_{v'\in V(G_{/Q})}\sum_{v\in f_{V}^{-1}(v')}w(v)  \\ 
&=b_{1}(G_{/Q})+\sum_{v'\in V(G_{/Q})}\Big(b_{1}(f^{-1}(v'))+\sum_{v\in f_{V}^{-1}(v')}w(v)\Big)  \\ 
&=b_{1}(G_{/Q})+\sum_{v'\in V(G_{/Q})}w_{/Q}(v') \hskip 2cm \text{by Definition \ref{def:weight-of-vertex}}  \\ 
&=g(G_{/Q},w_{/Q}).
\end{align*}
\end{proof}

If we regard the tropicalization $G$ for our smooth Riemann surfaces as a $\underline{0}$-weighted graph, then by this 
proposition we can see that all the weighted contractions of this weighted graph $(G,\underline{0})$ still stay in the 
category of genus $g$ weighted graph (with $n$ leaves).  And we denote that  
$(G',w')\preceq (G,w)$ if $(G',w')$ is a weighted contraction of $(G,w)$. In what follows we will call such a $(G,\underline{0})$, 
tropicalized from a smooth Riemann surface, a \emph{regular tropicalization}. It is clear that a regular tropicalization 
is always trivalent and $\underline{0}$-weighted.

We can define the automorphism group $\mathrm{Aut}(G,w)$ of a weighted graph $(G,w)$ as the subgroup of $\mathrm{Aut}(G)$ 
preserving the weights on vertices. Then if a tropicalization of a (singular) Riemann surface is $(G,w)$, we can also define 
the moduli space of tropicalizations based on this weighted graph $(G,w)$ as follows:
\[
M^{\mathrm{tr}}(G,w):=\mathcal{C}(G,w)/\mathrm{Aut}(G,w)=\mathbb{R}_{>0}^{|E(G)|}/\mathrm{Aut}(G,w).
\]
Hence $M_{G}$ can be rewritten as $M^{\mathrm{tr}}(G,\underline{0})$, and $\mathrm{Aut}(G)$ as $\mathrm{Aut}(G,\underline{0})$.
Then the space $\mathcal{C}(G,w)^{+}$ can be decomposed as follows 
\begin{align*}
\mathcal{C}(G,w)^{+}=\bigsqcup_{Q\subset E(G)}\mathcal{C}(G_{/Q},w_{/Q}).
\end{align*}

Now let us see how the action of the group $\mathrm{Aut}(G,w)$ extends to the closed cone $\mathcal{C}(G,w)^{+}$, in order to 
define the partially compactified moduli space $M^{\mathrm{tr}}(G,w)^{+}$. For an open 
face of $\mathcal{C}(G,w)^{+}$, i.e., the open cone of some weighted contraction $(G',w')$ of $(G,w)$, there is a natural 
action of $\mathrm{Aut}(G',w')$ so that we can similarly define $M^{\mathrm{tr}}(G',w')=\mathbb{R}_{>0}^{|E(G')|}/\mathrm{Aut}(G',w')$. 
But it may also happen that there are two distinct contractions from $(G,w)$ being isomorphic, i.e., there is an isomorphism 
$f:(G',w')\xrightarrow{\backsimeq} (G'',w'')$. Then we have an isometry 
$\phi_{f}:\mathcal{C}(G',w')\xrightarrow{\backsimeq} \mathcal{C}(G'',w'')$ induced by $f$. We also want to identify these 
isomorphic tropicalizations since the surfaces supported on these tropicalizations are of the same topological type. 
Therefore, we extend the action of $\mathrm{Aut}(G,w)$ to the closed cone $\mathcal{C}(G,w)^{+}$ in such a manner that 
the quotient space 
\[
M^{\mathrm{tr}}(G,w)^{+}:=\mathcal{C}(G,w)^{+}/\mathrm{Aut}(G,w)=\mathbb{R}_{\geq 0}^{|E(G)|}/\mathrm{Aut}(G,w)
\]
identifies isomorphic tropicalizations. And this quotient space endowed with the quotient topology 
is our desired partially compactified moduli space for $M^{\mathrm{tr}}(G,w)$.

\begin{lemma}\label{lem:finite-fibers}
The quotient map $\pi^{+}:\mathcal{C}(G,w)^{+}\rightarrow M^{\mathrm{tr}}(G,w)^{+}$ has finite fibers.
\end{lemma}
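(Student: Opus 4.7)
The plan is to observe that $\pi^{+}$ is a quotient by the natural action of the finite group $\mathrm{Aut}(G,w)$, so every fiber is an orbit under a finite group action and therefore finite. The argument will proceed in two short steps.

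First, I would verify that $\mathrm{Aut}(G,w)$ is a finite group. By the definition given earlier in the paper, an automorphism of $(G,w)$ is a bijection of $V(G)\cup E(G)\cup L(G)$ that sends vertices to vertices (preserving weights), edges to edges, and leaves to leaves, and that is compatible with the endpoint map $\epsilon$. Consequently $\mathrm{Aut}(G,w)$ embeds into the product of the symmetric groups on the three finite sets $V(G)$, $E(G)$ and $L(G)$. By Proposition \ref{prop:trivalent-graph}, these sets have cardinalities $2g-2+n$, $3g-3+n$ and $n$ respectively, so $|\mathrm{Aut}(G,w)|<\infty$.

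Second, I would note that the action of $\mathrm{Aut}(G,w)$ on $\mathcal{C}(G,w)^{+}=\mathbb{R}_{\geq 0}^{|E(G)|}$ is the coordinate-permutation action induced by the homomorphism $f\mapsto f_{E}$, i.e.\ the same prescription used on the open cone $\mathcal{C}(G,w)$. By the definition
\[
M^{\mathrm{tr}}(G,w)^{+}=\mathbb{R}_{\geq 0}^{|E(G)|}/\mathrm{Aut}(G,w),
\]
the fiber of $\pi^{+}$ over $[x]\in M^{\mathrm{tr}}(G,w)^{+}$ is precisely the $\mathrm{Aut}(G,w)$-orbit of any representative $x\in \mathcal{C}(G,w)^{+}$; by the orbit-stabilizer principle this orbit has cardinality $|\mathrm{Aut}(G,w)/\mathrm{Stab}(x)|\leq|\mathrm{Aut}(G,w)|$, hence is finite.

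I do not expect a serious obstacle: finiteness of $\mathrm{Aut}(G,w)$ is an immediate consequence of $G$ being a finite combinatorial object. The only mild subtlety worth flagging is that the phrase ``extend the action of $\mathrm{Aut}(G,w)$ to the closed cone'' could suggest that additional identifications (across different boundary strata coming from isomorphisms between distinct contractions of $(G,w)$) might enlarge the fibers; but the explicit equality $M^{\mathrm{tr}}(G,w)^{+}=\mathbb{R}_{\geq 0}^{|E(G)|}/\mathrm{Aut}(G,w)$ makes clear that the identifications are all realized by the single finite group $\mathrm{Aut}(G,w)$ acting by coordinate permutations, so no infinite fibers can arise.
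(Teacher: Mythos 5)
Your argument is fine for points in the open cone $\mathcal{C}(G,w)$, where the identifications really are just the coordinate permutations induced by $f\mapsto f_{E}$ for $f\in\mathrm{Aut}(G,w)$. But the ``mild subtlety'' you flag and then dismiss is precisely where the content of the lemma lies, and your dismissal rests on a misreading. The displayed equality $M^{\mathrm{tr}}(G,w)^{+}=\mathbb{R}_{\geq 0}^{|E(G)|}/\mathrm{Aut}(G,w)$ is notational shorthand: the paper explicitly says the action is \emph{extended} to the closed cone ``in such a manner that the quotient space identifies isomorphic tropicalizations,'' and Remark \ref{rem:two-types} spells out that this extension consists of (i) the action of the full group $\mathrm{Aut}(G_{/Q},w_{/Q})$ on each open face $\mathcal{C}(G_{/Q},w_{/Q})$, and (ii) isometries $\phi_{f}$ between distinct faces induced by isomorphisms $f:(G_{/Q},w_{/Q})\to(G_{/Q'},w_{/Q'})$. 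Neither of these need come from an element of $\mathrm{Aut}(G,w)$ permuting the coordinates of $\mathbb{R}_{\geq 0}^{|E(G)|}$: a contraction can acquire symmetries that the original graph does not have, and two subsets $Q\neq Q'$ not related by any automorphism of $G$ can still have isomorphic contractions. So the fiber over a boundary point is genuinely larger than an $\mathrm{Aut}(G,w)$-orbit, and your orbit-stabilizer count does not bound it.

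The repair is short and is what the paper does: there are only finitely many subsets $Q\subset E(G)$, hence finitely many faces and finitely many weighted contractions; for a given contraction only finitely many others can be isomorphic to it; and each face identified with the one containing a given point contributes at most one orbit of the finite group $\mathrm{Aut}(G_{/Q'},w_{/Q'})$ (finite because $G_{/Q'}$ is again a finite graph). The fiber is therefore a finite union of finite sets. Your first step (finiteness of the automorphism group of a finite graph, via the embedding into a product of symmetric groups) is correct and is one ingredient of this argument, but you must apply it to every contraction and also invoke the finiteness of the set of contractions, rather than reducing everything to the single group $\mathrm{Aut}(G,w)$.
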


\begin{proof}
Since our $(G,w)$ is a finite graph, its automorphism group $\mathrm{Aut}(G,w)$ is finite. So is any weighted contraction of it. 
On the other hand it has only finitely many weighted contractions, so for a given weighted contraction, there can only be finitely many 
other weighted contractions being isomorphic to it. Therefore the preimage of a point in $M^{\mathrm{tr}}(G,w)^{+}$ 
contains only finitely many points in $\mathcal{C}(G,w)^{+}$.
\end{proof}

\begin{lemma}\label{lem:strata}
$M^{\mathrm{tr}}(G',w')\subset M^{\mathrm{tr}}(G,w)^{+}$ if and only if $(G',w')\preceq (G,w)$.
\end{lemma}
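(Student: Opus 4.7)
The plan is to exploit the stratification of the closed cone, $\mathcal{C}(G,w)^{+}=\bigsqcup_{Q\subset E(G)}\mathcal{C}(G_{/Q},w_{/Q})$, together with the definition of the extended $\mathrm{Aut}(G,w)$-action as the one that identifies isomorphic tropicalizations. The equivalence then reduces to matching the strata of the quotient with the weighted contractions of $(G,w)$.

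For the ``if'' direction, I would assume $(G',w')\preceq (G,w)$, so by definition there is a subset $Q\subset E(G)$ and an isomorphism $(G_{/Q},w_{/Q})\cong (G',w')$. Under this isomorphism the open face $\mathcal{C}(G_{/Q},w_{/Q})$ of $\mathcal{C}(G,w)^{+}$ is identified with $\mathcal{C}(G',w')=\mathbb{R}_{>0}^{|E(G')|}$. The restriction of the extended $\mathrm{Aut}(G,w)$-action to this face is the action that, by the very construction of the quotient, identifies two points $\ell,\ell'\in\mathcal{C}(G',w')$ precisely when the weighted metric graphs $(G',w',\ell)$ and $(G',w',\ell')$ are isomorphic; but that is exactly the orbit equivalence defined by $\mathrm{Aut}(G',w')$. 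Hence $\pi^{+}(\mathcal{C}(G_{/Q},w_{/Q}))=\mathcal{C}(G',w')/\mathrm{Aut}(G',w')=M^{\mathrm{tr}}(G',w')$, giving the claimed inclusion as a stratum.

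For the ``only if'' direction, I would argue contrapositively: suppose $M^{\mathrm{tr}}(G',w')\subset M^{\mathrm{tr}}(G,w)^{+}$, pick any $[\ell']\in M^{\mathrm{tr}}(G',w')$, and consider its preimage in $\mathcal{C}(G,w)^{+}$ under $\pi^{+}$. By Lemma \ref{lem:finite-fibers} this preimage is non-empty and finite, and by the disjoint-face decomposition above, each of its points lies in some face $\mathcal{C}(G_{/Q_{i}},w_{/Q_{i}})$. Since the extended action preserves the stratification (it permutes isomorphic faces and acts within each face by permuting coordinates), the weighted metric graph supported at any such lift is isomorphic to $(G',w',\ell')$. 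In particular the underlying weighted graph $(G_{/Q_{i}},w_{/Q_{i}})$ is isomorphic to $(G',w')$, so $(G',w')\preceq (G,w)$ by definition of weighted contraction (the isomorphism pulls the data back to the original cone).

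The main technical obstacle is the ``if'' direction: one has to verify that the stabilizer of a face in $\mathrm{Aut}(G,w)$, together with the identifications coming from isomorphisms between distinct contractions $(G_{/Q_{1}},w_{/Q_{1}})\cong (G_{/Q_{2}},w_{/Q_{2}})$, generates exactly the $\mathrm{Aut}(G',w')$-action on $\mathcal{C}(G',w')$ and nothing more. Concretely one needs to check that every automorphism of the contracted graph is realized by some element in $\mathrm{Aut}(G,w)$ (possibly moving the distinguished $Q$ to a different $Q'$ with $(G_{/Q'},w_{/Q'})\cong (G_{/Q},w_{/Q})$), and conversely that no ``extra'' identifications are introduced. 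This follows from the fact that the extended action was defined precisely so that its orbits are isomorphism classes of weighted metric graphs, which is the same invariant used to build $M^{\mathrm{tr}}(G',w')$; once this matching of orbit relations is spelled out, both directions follow immediately from the stratification formula.
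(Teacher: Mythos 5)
Your proof is correct and follows essentially the same route as the paper, whose own proof is a one-liner reducing the statement to the fact that $\mathcal{C}(G',w')\subset\mathcal{C}(G,w)^{+}$ if and only if $(G',w')\preceq (G,w)$ and then descending through the quotient map. The ``technical obstacle'' you flag in the last paragraph is dispatched in the paper by definition rather than by argument: Remark \ref{rem:two-types} declares the extended action on each open face to consist of the full $\mathrm{Aut}(G_{/Q},w_{/Q})$-action together with the isometries between isomorphic faces, so the orbit relation on a face coincides with the one defining $M^{\mathrm{tr}}(G',w')$ by fiat, and no lifting of automorphisms to $\mathrm{Aut}(G,w)$ is required.
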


\begin{proof}
This is a direct consequence of the fact that $\mathcal{C}(G',w')\subset \mathcal{C}(G,w)^{+}$ 
if and only if $(G',w')\preceq (G,w)$.
\end{proof}

\begin{remark}\label{rem:two-types}
By the argument in Lemma $\ref{lem:finite-fibers}$ as well as the preceding discussion, 
we know that the extension of the action of $\mathrm{Aut}(G,w)$ to the boundary of 
$\mathcal{C}(G,w)$ consists of two types: 1) the action of $\mathrm{Aut}(G_{/Q},w_{/Q})$ on the open face 
$\mathcal{C}(G_{/Q},w_{/Q})$, and 2) the isometry $\phi_{f}$ between faces induced by the isomorphism $f$ 
between weighted contractions.
\end{remark}

\subsection{Extended tropicalizations}
Nevertheless, one can notice that the space $M^{\mathrm{tr}}(G,w)^{+}$ is still not compact. This can easily be seen by allowing 
the edge-lengths to go to arbitrarily large. In order to solve this problem, we introduce the so-called extended length function 
for edges by allowing their lengths to be infinity. Namely, now we have an ``extended" length function 
\[\ell:E(G)\rightarrow \mathbb{R}_{\geq 0}\cup \{\infty\},
\]
then the tropicalization $(G,w,\ell)$ is called an \emph{extended tropicalization}.

In fact, letting the length go to infinity amounts to enlarging the corresponding geodesic representative until the Riemann 
surface is cut open from there, and each of the two boundary components becomes a marked point. In other words, 
the edge can be understood 
as being broken up into two leaves. By this operation, depending on the geodesic representative is a separating curve or not, 
we get 1) either a new Riemann surfaces with two connected components 
for which the sum of their genera is $g$ and there is one new marked point on each component, 2) 
or a new Riemann surface with genus being $1$ less and having two new marked points on it. Then we say that the 
graph $(G,w,\ell)$ is a tropicalization of this (possibly disconnected) Riemann surface. 

The Riemann surfaces in these two cases are still 
smooth but there is also some defect in each case: for the first case, the Riemann surface is not connected any more; 
while in the latter case the Riemann surface is no longer of genus $g$. So these are not the Riemann surfaces that we 
want to tropicalize for our moduli space.

We immediately, however, notice that the Riemann surface corresponding to edge-length going to infinity is just the 
normalization of the nodal Riemann surface when the length of the same edge goes to zero. This fact happens to connect 
the two tropicalizations when edge goes to $\infty$ and $0$ respectively: 
the Riemann surface from which the former tropicalization comes is 
just the normalization of the latter one.

It suggests that we can identify $0$ and $\infty$ for each edge so that $(\mathbb{R}_{\geq 0}\cup \{\infty\})/\sim$ is 
homeomorphic to 
$\mathbb{S}^{1}$ and hence is compact. The explicit homeomorphism between them can be given by 
\[
h:(\mathbb{R}_{\geq 0}\cup \{\infty\})/\sim\rightarrow \mathbb{S}^{1}; \quad x\mapsto \exp(2\pi\sqrt{-1}\frac{x}{x+1}),
\]
where $0\sim\infty$. And one possible distance function on $(\mathbb{R}_{\geq 0}\cup \{\infty\})/\sim$ compatible 
with this topology can be given by 
\[
d(x,y):=2\pi\min(|\frac{x}{x+1}-\frac{y}{y+1}|,1-|\frac{x}{x+1}-\frac{y}{y+1}|)
\]
when a metric structure is needed.

By taking the homeomorphism $h$ on each coordinate we can have a homeomorphism from 
$((\mathbb{R}_{\geq 0}\cup \{\infty\})/\sim)^{|E(G)|}$ 
to $(\mathbb{S}^{1})^{|E(G)|}$, denoted by $h$ as well. 
Thanks to this homeomorphism, we will not distinguish the open cone 
$\mathcal{C}(G,w)$ with its image under $h$ in the topological sense in what follows. 
Similarly, a metric structure on $\big((\mathbb{R}_{\geq 0}\cup \{\infty\})/\sim\big)^{|E(G)|}$ compatible with the topology 
is just the product of the metric on each coordinate given above.

Now we can define a compactification of the open cone $\mathcal{C}(G,w)$, denoted by 
$\overline{\mathcal{C}(G,w)}$, as follows:
\[
\overline{\mathcal{C}(G,w)}:=(\mathbb{S}^{1})^{|E(G)|}.
\]

\begin{remark}
	We can immediately notice that there is a bijection between the closed cone $\mathcal{C}(G,w)^{+}$ and 
	the space $\overline{\mathcal{C}(G,w)}$, although they are endowed with different topology.
\end{remark}

Likewise the space $\overline{\mathcal{C}(G,w)}$ can be decomposed as follows 
\begin{align}\label{eqn:decomposition}
\overline{\mathcal{C}(G,w)}=\bigsqcup_{Q\subset E(G)}\mathcal{C}(G_{/Q},w_{/Q}).
\end{align}

As same as we did for $\mathcal{C}(G,w)^{+}$, we extend the action of $\mathrm{Aut}(G,w)$ to the compactified space 
 $\overline{\mathcal{C}(G,w)}$ in such a manner that 
the quotient space 
\[
\overline{M^{\mathrm{tr}}(G,w)}:=\overline{\mathcal{C}(G,w)}/\mathrm{Aut}(G,w)=(\mathbb{S}^{1})^{|E(G)|}/\mathrm{Aut}(G,w)
\]
identifies isomorphic tropicalizations. 
This quotient space endowed with the quotient topology 
is our desired compactified moduli space for $M^{\mathrm{tr}}(G,w)$.

\begin{remark}
	We only consider weighted contractions rather than extended tropicalizations in our space $\overline{\mathcal{C}(G,w)}$, 
	although we identify $0$ and $\infty$ for each edge. That is because we want our 
	tropicalizations to come from connected $n$-pointed genus $g$ Riemann surfaces.
\end{remark}

By the same argument for Lemma $\ref{lem:finite-fibers}$ and $\ref{lem:strata}$ we readily have the following similar 
properties for $\overline{M^{\mathrm{tr}}(G,w)}$.

\begin{lemma}
\begin{enumerate}
	\item The quotient map $\overline{\pi}:\overline{\mathcal{C}(G,w)}\rightarrow \overline{M^{\mathrm{tr}}(G,w)}$ has finite fibers.

	\item $M^{\mathrm{tr}}(G',w')\subset \overline{M^{\mathrm{tr}}(G,w)}$ if and only if $(G',w')\preceq (G,w)$.
\end{enumerate}
\end{lemma}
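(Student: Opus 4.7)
The plan is to imitate the proofs of Lemma~\ref{lem:finite-fibers} and Lemma~\ref{lem:strata} essentially verbatim, since the only change from $\mathcal{C}(G,w)^{+}$ to $\overline{\mathcal{C}(G,w)}$ is the topology on each coordinate, while the combinatorial stratification (\ref{eqn:decomposition}) and the extended $\mathrm{Aut}(G,w)$-action are formally the same. In particular, by Remark~\ref{rem:two-types} the action on $\overline{\mathcal{C}(G,w)}$ consists precisely of the $\mathrm{Aut}(G_{/Q},w_{/Q})$-action on each stratum $\mathcal{C}(G_{/Q},w_{/Q})$ together with the isometries $\phi_{f}$ between strata that are isomorphic as weighted graphs.

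For part (1), I would take any $[x]\in\overline{M^{\mathrm{tr}}(G,w)}$, pick a representative $x\in\overline{\mathcal{C}(G,w)}$, and note by (\ref{eqn:decomposition}) that $x$ lies in a unique stratum $\mathcal{C}(G_{/Q},w_{/Q})$. Then $\overline{\pi}^{-1}([x])$ consists of: (a) the finite $\mathrm{Aut}(G_{/Q},w_{/Q})$-orbit of $x$ inside this stratum, and (b) points in other strata $\mathcal{C}(G_{/Q'},w_{/Q'})$ obtained by applying an isometry $\phi_{f}$ for some isomorphism $f:(G_{/Q'},w_{/Q'})\xrightarrow{\sim}(G_{/Q},w_{/Q})$. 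Because $|E(G)|<\infty$ there are only finitely many $Q'\subset E(G)$, each automorphism group $\mathrm{Aut}(G_{/Q'},w_{/Q'})$ is finite, and there are only finitely many isomorphisms between any two finite weighted graphs, so the fiber is finite.

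Part (2) is then a direct quotient-level translation of the fact $\mathcal{C}(G',w')\subset\overline{\mathcal{C}(G,w)}$ iff $(G',w')\preceq(G,w)$, which is exactly (\ref{eqn:decomposition}). The ``if'' direction simply pushes $\mathcal{C}(G',w')$ down to $M^{\mathrm{tr}}(G',w')$ under the quotient. For ``only if'', I would pick any point of $M^{\mathrm{tr}}(G',w')\subset\overline{M^{\mathrm{tr}}(G,w)}$, lift it to some $x\in\overline{\mathcal{C}(G,w)}$ lying in a stratum $\mathcal{C}(G_{/Q},w_{/Q})$, and invoke Remark~\ref{rem:two-types} to conclude that $(G',w')\cong(G_{/Q},w_{/Q})$, whence $(G',w')\preceq(G,w)$. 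The only point requiring care (scarcely an obstacle) is checking that the compactified topology on $\overline{\mathcal{C}(G,w)}$ does not introduce any new identifications beyond those of Remark~\ref{rem:two-types}; this is built into the construction, since the extended action was defined stratum-by-stratum via (\ref{eqn:decomposition}).
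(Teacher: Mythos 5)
Your proposal is correct and follows exactly the route the paper takes: the paper proves this lemma simply by asserting that the arguments of Lemma \ref{lem:finite-fibers} and Lemma \ref{lem:strata} carry over verbatim, since the decomposition (\ref{eqn:decomposition}) and the extended action of $\mathrm{Aut}(G,w)$ described in Remark \ref{rem:two-types} are combinatorially identical for $\overline{\mathcal{C}(G,w)}$ and $\mathcal{C}(G,w)^{+}$. Your more explicit description of the fiber as a finite union of orbits and $\phi_{f}$-images is just a spelled-out version of the same argument.
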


Now we are ready to see what this space $\overline{M^{\mathrm{tr}}(G,w)}$ looks like.

\begin{proposition}\label{prop:decomposition}
	Let $(G,w)$ be some weighted contraction coming from a regular tropicalization of a smooth pointed Riemann surface. Then we have 
\begin{enumerate}
	\item There is a stratification for $\overline{M^{\mathrm{tr}}(G,w)}$ as follows
	\[
	\overline{M^{\mathrm{tr}}(G,w)}=\bigsqcup_{(G',w') \preceq (G,w) }M^{\mathrm{tr}}(G',w'),
	\]
	where $M^{\mathrm{tr}}(G,w)$ is open and dense 
	in $\overline{M^{\mathrm{tr}}(G,w)}$. 
	
	\item $\overline{M^{\mathrm{tr}}(G,w)}$ is compact and Hausdorff  as a topological space.
\end{enumerate}
\end{proposition}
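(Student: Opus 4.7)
The plan is to derive Part (1) from the face decomposition \eqref{eqn:decomposition} by passing to the quotient, and to reduce Part (2) to the classical fact that the quotient of a compact Hausdorff space by a closed equivalence relation is compact Hausdorff.

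For Part (1), I would push the decomposition $\overline{\mathcal{C}(G,w)} = \bigsqcup_{Q \subset E(G)} \mathcal{C}(G_{/Q}, w_{/Q})$ through $\overline{\pi}$. Since the extended identification precisely fuses faces corresponding to isomorphic weighted contractions, and since each individual face quotiented by its stabilizer yields $M^{\mathrm{tr}}(G', w')$, the resulting stratification is indexed by the isomorphism classes $(G', w') \preceq (G, w)$. For openness and density of the top stratum, note that via the homeomorphism $h$ the open cone $\mathcal{C}(G,w)$ corresponds to $(\mathbb{S}^{1} \setminus \{1\})^{|E(G)|}$, which is open and dense in $(\mathbb{S}^{1})^{|E(G)|}$. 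Moreover $\mathcal{C}(G,w)$ is the unique face of maximal dimension, so no face isomorphism of the type described in Remark \ref{rem:two-types} maps it anywhere except to itself: it is saturated under the equivalence relation, with only $\mathrm{Aut}(G,w)$ acting on it. Hence $M^{\mathrm{tr}}(G,w)$ is the image of an open saturated set (so open in the quotient) and of a dense set under a continuous surjection (so dense). Compactness in Part (2) is then immediate, as $\overline{M^{\mathrm{tr}}(G,w)}$ is the continuous image of the compact space $(\mathbb{S}^{1})^{|E(G)|}$.

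The substantive part of the proof is Hausdorffness, for which I would invoke the standard fact that if $X$ is compact Hausdorff and $R \subset X \times X$ is a closed equivalence relation, then $X/R$ is Hausdorff. It therefore suffices to show that the equivalence relation $R$ on $\overline{\mathcal{C}(G,w)}$ is closed. Writing $Q_{x} \subset E(G)$ for the set of coordinates of $x$ equal to $1$ (so that $x$ lies in the open face $\mathcal{C}(G_{/Q_{x}}, w_{/Q_{x}})$), one has $(x, y) \in R$ precisely when there is a weighted-graph isomorphism $f: (G_{/Q_{x}}, w_{/Q_{x}}) \to (G_{/Q_{y}}, w_{/Q_{y}})$ whose induced coordinate permutation carries $x$ to $y$. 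Only finitely many such isomorphisms arise in total. For each such $f$, the face isomorphism $\phi_{f}$ extends naturally to a homeomorphism $\overline{\phi_{f}}$ between the face closures $\overline{\mathcal{C}(G_{/Q_{1}}, w_{/Q_{1}})}$ and $\overline{\mathcal{C}(G_{/Q_{2}}, w_{/Q_{2}})}$, each a closed subset of $(\mathbb{S}^{1})^{|E(G)|}$, and so the graph of $\overline{\phi_{f}}$ is closed.

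The main obstacle is to verify that $R$ equals the finite union $\bigcup_{f} \mathrm{Graph}(\overline{\phi_{f}})$, from which closedness of $R$ is immediate. The delicate direction is that $\mathrm{Graph}(\overline{\phi_{f}})$ is contained in $R$: for $x$ lying in a deeper sub-face $\mathcal{C}(G_{/Q_{1}'}, w_{/Q_{1}'}) \subset \overline{\mathcal{C}(G_{/Q_{1}}, w_{/Q_{1}})}$ with $Q_{1} \subsetneq Q_{1}'$, one must check that $f$ descends to a weighted-graph isomorphism between $(G_{/Q_{1}'}, w_{/Q_{1}'})$ and the corresponding further contraction of $(G_{/Q_{2}}, w_{/Q_{2}})$, so that $(x, \overline{\phi_{f}}(x))$ is still an identification of the kind defining $R$. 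This amounts to the functoriality of weighted contractions under graph isomorphisms, which is routine but requires careful bookkeeping. Once $R$ is shown closed, Hausdorffness of $\overline{M^{\mathrm{tr}}(G,w)}$ follows from the classical theorem, completing the proof.
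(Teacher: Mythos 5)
Your proposal is correct, and Part (1) together with compactness is argued exactly as in the paper: push the face decomposition (\ref{eqn:decomposition}) through $\overline{\pi}$, observe that $\mathcal{C}(G,w)$ is saturated (no proper contraction has $|E(G)|$ edges, so the top face is only identified with itself via $\mathrm{Aut}(G,w)$), and conclude openness and density from $\overline{\pi}^{-1}(M^{\mathrm{tr}}(G,w))=\mathcal{C}(G,w)$. Where you genuinely diverge is the Hausdorffness. The paper argues by hand: it fixes a metric on $(\mathbb{S}^{1})^{|E(G)|}$, takes the finite fibers over two distinct points, chooses $\varepsilon$ small enough that the balls around the two fibers are pairwise disjoint and meet a face closure $\overline{F_{Q}}$ only if their centers lie in it, and then verifies directly that the resulting unions of balls are invariant under the (extended) identifications, hence saturated; this produces disjoint saturated open neighborhoods. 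You instead invoke the classical theorem that a quotient of a compact Hausdorff space by a closed equivalence relation is Hausdorff, and reduce to showing that $R$ is the finite union of the graphs of the extensions $\overline{\phi_{f}}$ to face closures, each of which is compact, hence closed. Both routes are valid, and both ultimately rest on the same two ingredients: finiteness of the set of identifying maps (Lemma \ref{lem:finite-fibers}) and the fact that an isomorphism $f:(G_{/Q_{1}},w_{/Q_{1}})\to(G_{/Q_{2}},w_{/Q_{2}})$ descends to isomorphisms of all further weighted contractions, so that $\overline{\phi_{f}}$ restricted to a deeper face still implements a legitimate identification. Note that the paper uses this same descent silently when it asserts $\phi_{f}(u_{i})\in\overline{\pi}^{-1}(\overline{u})$ for $u_{i}\in\overline{F_{Q}}\backslash F_{Q}$, so you are not assuming anything the paper does not; but since this functoriality (essentially that $(G_{/Q})_{/S}=G_{/(Q\cup S)}$ with weights matching via (\ref{eqn:contraction-of-b1}) and (\ref{eqn:additivity-of-b1})) is the crux of either argument, you should write it out rather than label it routine. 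Your version buys a shorter, metric-free and more conceptual proof at the cost of citing the Bourbaki-type quotient theorem; the paper's version is longer but self-contained and exhibits the separating saturated neighborhoods explicitly.
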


\begin{proof}
The stratification for $\overline{M^{\mathrm{tr}}(G,w)}$ comes directly from the decomposition ($\ref{eqn:decomposition}$) 
for $\overline{\mathcal{C}(G,w)}$ and the definition of $\overline{M^{\mathrm{tr}}(G,w)}$.

Since $M^{\mathrm{tr}}(G,w)$ is defined as $\mathcal{C}(G,w)/\mathrm{Aut}(G,w)$ and $\mathcal{C}(G,w)$ is open and dense in 
$\overline{\mathcal{C}(G,w)}$, by $\overline{\pi}^{-1}(M^{\mathrm{tr}}(G,w))=\mathcal{C}(G,w)$ we also have 
$M^{\mathrm{tr}}(G,w)$ is open and dense in $\overline{M^{\mathrm{tr}}(G,w)}$. This completes the proof of the statement (1).

Since $\overline{\mathcal{C}(G,w)}$ is compact, it is clear that $\overline{M^{\mathrm{tr}}(G,w)}$ is compact as well by 
definition. 

Now we show the Hausdorffness of $\overline{M^{\mathrm{tr}}(G,w)}$. For the simplicity of notations, we denote 
the open sectional face $\mathcal{C}(G_{/Q},w_{/Q})$ (they are not open faces any more in $\overline{\mathcal{C}(G,w)}$ due to different 
topology) by $F_{Q}$ for any $Q\subset E(G)$, and its closure in $\overline{\mathcal{C}(G,w)}$ by 
$\overline{F_{Q}}$. As discussed in Remark $\ref{rem:two-types}$, 
there are two types of maps leading to identifying a point in $\overline{\mathcal{C}(G,w)}$ with other points in order to get 
$\overline{M^{\mathrm{tr}}(G,w)}$: 

\noindent (i) An element $g\in \mathrm{Aut}(G_{/Q},w_{/Q})$ identifying $p$ with $gp$ for any $p\in F_{Q}$.

\noindent (ii) An isometry $\phi_{f}:F_{Q}\rightarrow F_{Q'}$, induced by an isomorphism 
$f:(G_{/Q},w_{/Q})\rightarrow (G_{/Q'},w_{/Q'})$, identifying $p$ with $\phi_{f}(p)$ for any $p\in F_{Q}$.

Let $\overline{u}$ and $\overline{v}$ be two distinct points in $\overline{M^{\mathrm{tr}}(G,w)}$. Write their preimages 
as $\overline{\pi}^{-1}(\overline{u})=\{u_{1},\dots,u_{s}\}$ and $\overline{\pi}^{-1}(\overline{v})=\{v_{1},\dots,v_{t}\}$ 
respectively. Then there exists a sufficiently small $\varepsilon>0$ (using the metric structure on $\overline{\mathcal{C}(G,w)}$) 
such that the following holds.

\noindent (a) For every $i,j$ the open balls $B(u_{i},\varepsilon)$ and $B(v_{j},\varepsilon)$ do not intersect each other.

\noindent (b) If $\overline{F_{Q}}\cap B(u_{i},\varepsilon)\neq \emptyset$ then $u_{i}\in \overline{F_{Q}}$; likewise, 
if $\overline{F_{Q}}\cap B(v_{j},\varepsilon)\neq \emptyset$ then $v_{j}\in \overline{F_{Q}}$.

Let $U:=\cup_{i=1}^{s}B(u_{i},\varepsilon)$ and $V:=\cup_{j=1}^{t}B(v_{j},\varepsilon)$. It is clear that $U$ and $V$ are 
open subsets of $\overline{\mathcal{C}(G,w)}$, and they are disjoint by (a) above.

We claim that 
\[
\overline{\pi}^{-1}(\overline{\pi}(U))=U \quad \text{and} \quad \overline{\pi}^{-1}(\overline{\pi}(V))=V.
\]

We extend the preceding action (i) and map (ii) to $\overline{\mathcal{C}(G,w)}$ by fixing anything outside of $\overline{F_{Q}}$. 
Namely, we set $gp=p$ and $\phi_{f}(p)=p$ for any $p\in\overline{\mathcal{C}(G,w)}\backslash\overline{F_{Q}}$, every 
$g\in \mathrm{Aut}(G_{/Q},w_{/Q})$ and every $\phi_{f}$ as above.

Now in order to prove the claim, it suffices to prove that the open subset $U$ is invariant under the extended action (i) 
and the extended map (ii). 

For that we pick $Q$ and let $g\in \mathrm{Aut}(G_{/Q},w_{/Q})$. As $g$ acts trivially outside $\overline{F_{Q}}$ we can 
assume that $\overline{F_{Q}}\cap U\neq\emptyset$. That means there exists a $u_{i}$ such that 
$\overline{F_{Q}}\cap B(u_{i},\varepsilon)\neq\emptyset$. This implies that $u_{i}\in \overline{F_{Q}}$ by (b) above. 
Without loss of generality, we can assume that $u_{i}\in F_{Q}$ for otherwise if $u_{i}\in \overline{F_{Q}}\backslash F_{Q}$ 
we can always find a $Q'\supset Q$ such that $u_{i}\in F_{Q'}$. Hence we have $gu_{i}\in \overline{\pi}^{-1}(\overline{u})$ 
and thus 
\[
(\overline{F_{Q}}\cap B(u_{i},\varepsilon))^{g}=\overline{F_{Q}}\cap B(gu_{i},\varepsilon)\subset U.
\]
This proves the invariance of $U$ under the extended action of (i).

To prove the invariance of $U$ under the extended map (ii). Let $p\in F_{Q}\cap U$, then there exists a $u_{i}$ such that 
$p\in \overline{F_{Q}}\cap B(u_{i},\varepsilon)$. Let $f$ be an isomorphism from $(G_{/Q},w_{/Q})$ to another contraction 
$(G_{/Q'},w_{/Q'})$ and $\phi_{f}$ be the induced isometry from $\overline{F_{Q}}$ to $\overline{F_{Q'}}$. Then we have 
\[
\phi_{f}(\overline{F_{Q}}\cap B(u_{i},\varepsilon))=\overline{F_{Q'}}\cap B(\phi_{f}(u_{i}),\varepsilon)\subset U
\]
since we know that $\phi_{f}(u_{i})\in \overline{\pi}^{-1}(\overline{u})$ by (ii) above.

Now the claim is proved. This yields that $\overline{\pi}(U)$ and $\overline{\pi}(V)$ are open and disjoint in 
$\overline{M^{\mathrm{tr}}(G,w)}$. Since $\overline{u}\in\overline{\pi}(U)$ and $\overline{v}\in\overline{\pi}(V)$, 
the Hausdorffness of $\overline{M^{\mathrm{tr}}(G,w)}$ is proved.
\end{proof}

By applying this proposition to a regular tropicalization of smooth $n$-pointed Riemann surfaces, we immediately have 

\begin{theorem}\label{thm:hausdorff}
	Let $(G,\underline{0})$ be a regular tropicalization of a smooth $n$-pointed Riemann surface. Then we have 
	\begin{enumerate}
		\item There is a stratification for $\overline{M^{\mathrm{tr}}(G,\underline{0})}$ as follows
		\[
		\overline{M^{\mathrm{tr}}(G,\underline{0})}=\bigsqcup_{(G',w') \preceq (G,\underline{0})}M^{\mathrm{tr}}(G',w'),
		\]
		where $M^{\mathrm{tr}}(G,\underline{0})$ is open and dense 
		in $\overline{M^{\mathrm{tr}}(G,\underline{0})}$. 
		
		\item $\overline{M^{\mathrm{tr}}(G,\underline{0})}$ is compact and Hausdorff as a topological space.
	\end{enumerate}
\end{theorem}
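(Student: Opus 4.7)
The plan is to derive Theorem \ref{thm:hausdorff} directly from Proposition \ref{prop:decomposition} by specialization. First I would note that a regular tropicalization $(G,\underline{0})$ is, by definition, the tropicalization of a smooth pointed Riemann surface, and that it is trivially a weighted contraction of itself (take the contracted edge set $Q = \emptyset$). Thus $(G,\underline{0})$ satisfies the hypothesis of Proposition \ref{prop:decomposition}, namely being a weighted contraction of a regular tropicalization of a smooth pointed Riemann surface.

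Next I would apply Proposition \ref{prop:decomposition} with $(G,w) = (G,\underline{0})$. Statement (1) of that proposition immediately yields the stratification
\[
\overline{M^{\mathrm{tr}}(G,\underline{0})} = \bigsqcup_{(G',w') \preceq (G,\underline{0})} M^{\mathrm{tr}}(G',w'),
\]
together with the openness and density of $M^{\mathrm{tr}}(G,\underline{0})$ in $\overline{M^{\mathrm{tr}}(G,\underline{0})}$, which is exactly assertion (1) of the theorem. Statement (2) of the proposition delivers compactness and Hausdorffness of $\overline{M^{\mathrm{tr}}(G,\underline{0})}$, which is assertion (2). No additional argument is needed beyond matching the notation.

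Since this theorem is essentially a restatement of Proposition \ref{prop:decomposition} in the special case of interest, I do not expect a genuine obstacle. The real work has already been done in the proof of Proposition \ref{prop:decomposition}: the stratification descends from the cone decomposition (\ref{eqn:decomposition}) via the quotient map $\overline{\pi}$, compactness follows from compactness of $\overline{\mathcal{C}(G,\underline{0})} = (\mathbb{S}^{1})^{|E(G)|}$, and Hausdorffness was obtained by the careful neighborhood-separation argument that accounts for both types of extended identifications described in Remark \ref{rem:two-types}. The present theorem is just the payoff of that proposition in the case originally motivating the construction.
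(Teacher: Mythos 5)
Your proposal is correct and matches the paper exactly: the paper derives Theorem \ref{thm:hausdorff} by applying Proposition \ref{prop:decomposition} to the regular tropicalization itself (viewed as the trivial weighted contraction with $Q=\emptyset$), which is precisely your argument.
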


\begin{remark}
If we choose another regular tropicalization $(G',\underline{0})$ for this smooth $n$-pointed Riemann surface, we still have 
the above theorem for $\overline{M^{\mathrm{tr}}(G',\underline{0})}$. But these two moduli spaces as orbifolds are different 
because the automorphism group of the tropicalizations may not be the same.
\end{remark}

\begin{question}
Can we connect these two moduli spaces through the moduli space of Riemann surfaces or corresponding Teichm\"uller space? 
Since $(G,\underline{0})$ and $(G',\underline{0})$ are both tropicalized from a smooth $n$-pointed genus $g$ Riemann surface.
\end{question}

\section{Comparing with other moduli spaces}\label{sec:comparison}

In this section we compare our moduli space of tropicalizations with the moduli space of Riemann surfaces (or equivalently, 
complex algebraic curves), 
since this is the main goal for us to introduce this moduli space, for which we hope it could provide a new angle to 
understand the tropicalization of the moduli space of algebraic curves. 
Next we also compare it with the moduli space of tropical curves in the sense 
of \cite{Caporaso-2013}, since a tropicalization is essentially a tropical curve.  

In fact, in \cite{Caporaso-2013}, Caporaso established the so-called moduli space of $n$-pointed genue $g$ tropical curves, 
$\overline{M}_{g,n}^{\mathrm{trop}}$. And in \cite{Abramovich-Caporaso-Payne}, Abramovich, Caporaso and Payne identify this space 
with the skeleton $\overline{\Sigma}(\overline{\mathcal{M}}_{g,n})$ of the moduli stack of curves $\overline{\mathcal{M}}_{g,n}$, 
via the analytification of its coarse moduli space, $\overline{M}_{g,n}^{\mathrm{an}}$. 
This can be illustrated by the following commutative diagram.

\[
\begin{tikzpicture}[scale=2]
\node (A) at (0,1) {$\overline{M}_{g,n}^{\mathrm{an}}$};
\node (B) at (3,1) {$\overline{\Sigma}(\overline{\mathcal{M}}_{g,n})$};
\node (D) at (3,0) {$\overline{M}_{g,n}^{\mathrm{trop}}$};
\draw
(A) edge[->,>=angle 90]   node[above] {$\text{Retraction}$}    (B)
(B) edge[->,>=angle 90]   node[right] {$\backsimeq$}     (D)
(A) edge[->,>=angle 90]   node[left] {$\text{Tropicalization}$}     (D);
\end{tikzpicture}
\]

There is also a stratification on the moduli stack $\overline{\mathcal{M}}_{g,n}$ which can be described in a combinatorial way: 
\[
\overline{\mathcal{M}}_{g,n}=\bigsqcup_{\text{$(G,w)$ stable, genus $g$, $n$ leaves}}\mathcal{M}^{\mathrm{alg}}(G,w),
\]
where $\mathcal{M}^{\mathrm{alg}}(G,w)$ parametrizes those curves whose dual graph is isomorphic to $(G,w)$. Here by stable graphs 
we mean the degree of the vertex is required to be $\geq 3$ (resp. $\geq 1$) if $w(v)=0$ (resp. $w(v)=1$). The codimension of 
$\mathcal{M}^{\mathrm{alg}}(G,w)$ in $\overline{\mathcal{M}}_{g,n}$ is just the number of edges of $(G,w)$, and 
$\mathcal{M}^{\mathrm{alg}}(G',w')\subset \overline{\mathcal{M}^{\mathrm{alg}}(G,w)}$ if and only if $(G,w)\preceq (G',w')$.

Therefore, we see that there is an \emph{order-reversing} correspondence between the moduli space of algebraic curves and 
the moduli space of tropical curves.
We will, however, show that there is an \emph{order-preserving} correspondence, in a partial way though, between our moduli space of 
tropicalizations and the moduli space of Riemann surfaces.

\subsection{Comparing with the moduli space of Riemann surfaces}
Since nodal Riemann surfaces are parametrized by the boundary strata of the Deligne-Mumford space $\overline{M}_{g,n}$, 
it is necessary for us to see how to tropicalize a nodal Riemann surface before establishing the correspondence between the two 
moduli spaces.

If we are given a nodal Riemann surface in $\overline{M}_{g,n}$, 
its normalization gives rise to a (possibly disconnected) smooth Riemann surface. Every node 
marks a point on each branch on which it lies. By this operation each component becomes a pointed smooth Riemann surface 
and we tropicalize it as before. 
Thus we get a tropicalization for each smooth component and we join each pair of leaves, coming from the same node, to get an edge. 
Hence we get a connected trivalent graph of genus $g$: joining leaves would not change the degree of vertex; separating node plays no 
role for the genus; joining leaves corresponding to a nonseparating node makes the $1$ less genus to come back. 
Then we contract those newly formed edges so as to get a weighted contraction.  
We say this weighted contraction is a tropicalization of the nodal Riemann surface. Note that the tropicalization 
is not unique since the tropicalization of a smooth Riemann surface is not unique.

Now we still fix a regular tropicalization $(G,\underline{0})$ for an $n$-pointed genus $g$ Riemann surface. 
Let $(G',w')$ be a weighted contraction of $(G,\underline{0})$, then the dimension of $M^{\mathrm{tr}}(G',w')$ 
in $\overline{M^{\mathrm{tr}}(G,\underline{0})}$ is equal to $|E(G')|$.

In the meantime, we also know that $(G',w')$ underlies some nodal Riemann surface by letting those 
geodesic representatives corresponding to contracted edges go to zero. From this point of view, 
we denote by $M^{\mathrm{rs}}(G'_{\preceq G},w')$ the locus in 
$\overline{M}_{g,n}$ of those nodal Riemann surfaces by shrinking those geodesic representatives, corresponding to 
contracted edges in $(G,\underline{0})$, to a point. Here the superscript ``rs'' stands for ``Riemann surface''.

\begin{remark}
The space $M^{\mathrm{rs}}(G'_{\preceq G},w')$, unfortunately, also depends on the regular tropicalization 
$(G,\underline{0})$. The graph 
$(G',w')$ may be contracted from some other regular tropicalization, say $(G_{1},\underline{0})$. 
By shrinking corresponding geodesic representatives we may get a nodal Riemann surface of distinct topological type.

On the other hand, suppose $(G'',w'')$ is another weighted contraction of $(G,\underline{0})$ which is not isomorphic to $(G',w')$, 
the moduli space $M^{\mathrm{rs}}(G''_{\preceq G},w'')$ may be the same as $M^{\mathrm{rs}}(G'_{\preceq G},w')$, since they may 
support surfaces of the same topological type.
\end{remark} 

\begin{lemma}\label{lem:irre-and-dim}
Fix a regular tropicalization $(G,\underline{0})$ of a smooth $n$-pointed genus $g$ Riemann surface. 
Let $(G',w')$ be a weighted contraction of 
it. We have that $M^{\mathrm{rs}}(G'_{\preceq G},w')$ is an irreducible quasi-projective variety and its dimension is equal to $|E(G')|$.
\end{lemma}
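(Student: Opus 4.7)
The plan is to identify $M^{\mathrm{rs}}(G'_{\preceq G},w')$ with a single boundary stratum of $\overline{M}_{g,n}$ parametrizing nodal Riemann surfaces of one fixed topological type, and then to read off irreducibility, quasi-projectivity and dimension from the classical structure theorem for such strata.

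First, let $Q\subset E(G)$ denote the set of edges contracted in passing from $(G,\underline{0})$ to $(G',w')$, so that $G'=G_{/Q}$ and $|E(G')|=|E(G)|-|Q|$. A point of $M^{\mathrm{rs}}(G'_{\preceq G},w')$ is a nodal Riemann surface obtained from a smooth Riemann surface supported on $(S,P)$ by pinching the geodesic representative of each $\alpha\in Q$ to a node. The topological type of the resulting nodal surface depends only on the isotopy classes of the curves in $Q$, hence only on the combinatorial data $(G,Q)$. Its weighted dual graph $\Gamma$ has vertices in bijection with the connected components of $S\setminus A_{Q}$ (equivalently, with the connected components of the subgraph of $G$ spanned by $E_{1}:=E(G)\setminus Q$), edges in bijection with $Q$ (one per node), legs in bijection with $P$, and weight at each vertex equal to the genus of the corresponding component of the normalization. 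Thus $M^{\mathrm{rs}}(G'_{\preceq G},w')$ coincides with the single stratum $\mathcal{M}^{\mathrm{alg}}(\Gamma)\subset\overline{M}_{g,n}$ described in the introduction of Section~\ref{sec:comparison}.

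The next step is to invoke the classical description of such boundary strata, as in \cite{Arbarello-Cornalba-Griffiths}: the clutching morphism realises $\mathcal{M}^{\mathrm{alg}}(\Gamma)$ as the quotient of $\prod_{v\in V(\Gamma)}M_{g_{v},n_{v}}$ by the finite group $\mathrm{Aut}(\Gamma)$, where $g_{v}$ is the weight and $n_{v}$ is the number of half-edges and legs incident to $v$. Each factor $M_{g_{v},n_{v}}$ is an irreducible quasi-projective variety; a finite product of such varieties is again irreducible and quasi-projective; and a quotient by a finite group preserves both properties. This immediately yields irreducibility and quasi-projectivity of $M^{\mathrm{rs}}(G'_{\preceq G},w')$. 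For the dimension, the codimension of $\mathcal{M}^{\mathrm{alg}}(\Gamma)$ in $\overline{M}_{g,n}$ equals the number of nodes, namely $|Q|$, so
\[
\dim M^{\mathrm{rs}}(G'_{\preceq G},w')=(3g-3+n)-|Q|=|E(G)|-(|E(G)|-|E(G')|)=|E(G')|,
\]
using $|E(G)|=3g-3+n$ from Proposition~\ref{prop:trivalent-graph}.

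The point requiring the most care is the bookkeeping in the first step: the weighted dual graph $\Gamma$ relevant on the algebraic side is obtained from $G$ by contracting the \emph{complementary} edge set $E_{1}=E(G)\setminus Q$ rather than $Q$ itself, so $\Gamma$ and $(G',w')=(G_{/Q},w_{/Q})$ should not be conflated; they are in general very different graphs and play dual roles. One must check both that the assignment $(G,Q)\mapsto\Gamma$ really produces a well-defined stable weighted dual graph encoding exactly the topological type of the pinched surfaces, and that two choices of $Q$ yielding isomorphic $(G',w')$ also yield isomorphic $\Gamma$ so that the notation $M^{\mathrm{rs}}(G'_{\preceq G},w')$ is unambiguous. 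Once this combinatorial dictionary is in place, the remainder of the argument is routine.
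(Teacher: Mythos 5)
Your proof is correct and, at its core, is the same as the paper's: both realize $M^{\mathrm{rs}}(G'_{\preceq G},w')$ as the image of the product of moduli spaces of the normalization components under a finite clutching/gluing map, and read off irreducibility, quasi-projectivity and the dimension from that presentation. The difference is one of packaging: the paper takes a point $(C,\underline{p})$, normalizes it explicitly, writes down the finite surjection $M_{g_{1},n_{1}+d_{1}}\times\cdots\times M_{g_{k},n_{k}+d_{k}}\rightarrow M^{\mathrm{rs}}(G'_{\preceq G},w')$, and computes the dimension by hand using $\sum_{i}d_{i}=2d$ and $g=\sum_{i}g_{i}+d-k+1$, whereas you identify the locus with the standard boundary stratum $\mathcal{M}^{\mathrm{alg}}(\Gamma)$ of $\overline{M}_{g,n}$ and quote the classical facts (presentation as a finite quotient of $\prod_{v}M_{g_{v},n_{v}}$, codimension equal to the number of nodes $|Q|$), which gives the same count $\dim=(3g-3+n)-|Q|=|E(G')|$. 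Your explicit warning that the relevant dual graph is $\Gamma=G_{/E_{1}}$ with $E_{1}=E(G)\setminus Q$, and not $G'=G_{/Q}$, is a genuinely useful point that the paper leaves implicit: the vertices of $\Gamma$, i.e.\ the components of the pinched surface, are the connected components of the subgraph spanned by $E_{1}$, and these are exactly the $C_{i}^{\nu}$ appearing in the paper's normalization, with $n_{v}=n_{i}+d_{i}$. Either route closes the argument; the paper's version is self-contained while yours leans on the cited structure theorem for boundary strata, and both share the same implicit identification of $M^{\mathrm{rs}}(G'_{\preceq G},w')$ with the full stratum of its topological type.
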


\begin{proof}
Let $(C,\underline{p})\in M^{\mathrm{rs}}(G'_{\preceq G},w')$ where $\underline{p}$ is an ordered form of $P$. 
Then $d:=3g-3+n-|E(G')|$ is the number of its nodes by 
construction. Denote by $C_{1},\dots,C_{k}$ its irreducible components, and by $n_{i}$ the number of marked points on 
$C_{i}$. It is clear that $\sum_{i=1}^{k}n_{i}=n$.

We take the normalization of $C$ and write it as $\nu:\bigsqcup_{i=1}^{k}C_{i}^{\nu}\rightarrow C$, where each $C_{i}^{\nu}$ is a 
smooth genus $g_{i}$ Riemann surface with $n_{i}+d_{i}$ marked points. Here $d_{i}:=|\nu^{-1}(C_{\mathrm{sing}})\cap C_{i}^{\nu}|$.
We notice that $(C_{i}^{\nu},\underline{p}^{(i)})$ falls into $M_{g_{i},n_{i}+d_{i}}$ since the normalization preserves the 
stability of Riemann surfaces. Then it gives rise to such a following surjective morphism:
\[
M_{g_{1},n_{1}+d_{1}}\times \cdots \times M_{g_{k},n_{k}+d_{k}}\rightarrow M^{\mathrm{rs}}(G'_{\preceq G},w').
\]
This morphism can be naturally extended to the following morphism:
\[
\overline{M}_{g_{1},n_{1}+d_{1}}\times \cdots \times \overline{M}_{g_{k},n_{k}+d_{k}}\rightarrow 
\overline{M}_{g,n}.
\]
Since $M^{\mathrm{rs}}(G'_{\preceq G},w')$ is open in its image, it is quasi-projective.
It is well-known that $\overline{M}_{g_{i},n_{i}+d_{i}}$ is irreducible of dimension $3g_{i}-3+n_{i}+d_{i}$ for all 
$i=1,\dots,k$. Hence $M^{\mathrm{rs}}(G'_{\preceq G},w')$ is irreducible as well.

The above surjection $
M_{g_{1},n_{1}+d_{1}}\times \cdots \times M_{g_{k},n_{k}+d_{k}}\rightarrow M^{\mathrm{rs}}(G'_{\preceq G},w')
$ is clearly a finite map, we thus have 
\[
\dim M^{\mathrm{rs}}(G'_{\preceq G},w')=\sum_{i=1}^{k}(3g_{i}-3+n_{i}+d_{i})=3\sum_{i=1}^{k}g_{i}-3k+n+2d
\]
since $\sum_{i=1}^{k}d_{i}=2d$. Applying $g=\sum_{i=1}^{k}g_{i}+d-k+1$, we get  
\[
\dim M^{\mathrm{rs}}(G'_{\preceq G},w')=3g-3d+3k-3-3k+n+2d=3g-3+n-d=|E(G')|.
\]
\end{proof}

By the discussion in the beginning of this subsection, the tropicalization of a nodal Riemann surface is contracted from 
a trivalent genus $g$ graph with $n$ leaves (via the tropicalization of its normalization), 
so we have also a stratification for $\overline{M}_{g,n}$ as follows:
\[
\overline{M}_{g,n}=\bigsqcup_{\big(\text{$(G'_{\preceq G},w')$, $G$ trivalent, genus $g$, $n$ leaves}\big)/\sim}
M^{\mathrm{rs}}(G'_{\preceq G},w'),
\]
where $(G'_{\preceq G_{1}},w')\sim(G''_{\preceq G_{2}},w'')$ if they support the Riemann surfaces of the same topological type.

Then we have the following partition analogy between $\overline{M^{\mathrm{tr}}(G,\underline{0})}$ and $\overline{M}_{g,n}$.

\begin{theorem}\label{thm:analogy}
Fix a regular tropicalization $(G,\underline{0})$ of a smooth $n$-pointed genus $g$ Riemann surface. 
Let $(G',w')$ be a weighted contraction of 
it. Then the association as below
\[
M^{\mathrm{tr}}(G',w')\mapsto M^{\mathrm{rs}}(G'_{\preceq G},w')
\]
gives a map from the stratification of $\overline{M^{\mathrm{tr}}(G,\underline{0})}$ to the stratification of $\overline{M}_{g,n}$.
And we have 
\begin{enumerate}
  \item $\dim M^{\mathrm{tr}}(G',w')=\dim M^{\mathrm{rs}}(G'_{\preceq G},w')=|E(G')|$.
  
  \item Suppose $(G'',w'')\preceq (G',w')$, then we have $M^{\mathrm{tr}}(G'',w'')\subset  \overline{M^{\mathrm{tr}}(G',w')}$ 
  and $M^{\mathrm{rs}}(G''_{\preceq G},w'')\subset  \overline{M^{\mathrm{rs}}(G'_{\preceq G},w')}$.
\end{enumerate}
\end{theorem}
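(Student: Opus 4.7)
The plan is to handle the three claims in turn: first verify that the association is a well-defined map between the two stratifications, then compute the two dimensions, and finally check the order-preservation statement in part (2) on both the tropical and the Riemann surface sides.

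For well-definedness, I first argue that for any weighted contraction $(G',w')$ of the fixed regular tropicalization $(G,\underline{0})$, the locus $M^{\mathrm{rs}}(G'_{\preceq G},w')$ is actually one of the strata in the $\sim$-decomposition of $\overline{M}_{g,n}$ recalled just before the theorem. This follows by tracing the tropicalization recipe from Section \ref{sec:comparison}: shrinking the geodesic representatives of precisely those edges of $G$ that get contracted to obtain $(G',w')$ yields a nodal Riemann surface whose dual graph, built by normalizing, joining paired leaves, and contracting new edges, is isomorphic to $(G',w')$. Conversely every surface in $M^{\mathrm{rs}}(G'_{\preceq G},w')$ has this form, so the association is genuinely a map between the two indexing sets.

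For part (1), the tropical dimension is immediate: $M^{\mathrm{tr}}(G',w')$ is the quotient $\mathbb{R}_{>0}^{|E(G')|}/\mathrm{Aut}(G',w')$ by a finite group, so it has dimension $|E(G')|$. The Riemann-surface dimension $\dim M^{\mathrm{rs}}(G'_{\preceq G},w')=|E(G')|$ is already established in Lemma \ref{lem:irre-and-dim}, so I would simply invoke it.

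For part (2), I treat the two inclusions separately. On the tropical side, if $(G'',w'') \preceq (G',w')$ then $\mathcal{C}(G'',w'')$ is one of the open faces appearing in the decomposition of $\overline{\mathcal{C}(G',w')}$ analogous to (\ref{eqn:decomposition}), hence lies in the closure $\overline{\mathcal{C}(G',w')}$; passing to the quotient by the finite group $\mathrm{Aut}(G',w')$, which commutes with taking closures, produces $M^{\mathrm{tr}}(G'',w'') \subset \overline{M^{\mathrm{tr}}(G',w')}$. On the Riemann-surface side, a surface $C''$ in $M^{\mathrm{rs}}(G''_{\preceq G},w'')$ is obtained from a surface $C'$ in $M^{\mathrm{rs}}(G'_{\preceq G},w')$ by shrinking the geodesic representatives of the additional edges contracted when passing from $(G',w')$ to $(G'',w'')$; letting the lengths of these representatives tend to zero along a continuous family (Fenchel--Nielsen coordinates extended through the Deligne--Mumford boundary, equivalently the plumbing construction) realises $C''$ as a limit of points of $M^{\mathrm{rs}}(G'_{\preceq G},w')$ in $\overline{M}_{g,n}$.

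The step I expect to require the most care is the last one: making rigorous that shrinking the extra geodesic representatives inside the stratum $M^{\mathrm{rs}}(G'_{\preceq G},w')$ actually specializes to a surface whose dual tropicalization is $(G''_{\preceq G},w'')$ and not to some neighbouring stratum. The clean way is to appeal to the compatibility between the Fenchel--Nielsen parametrization for nodal surfaces and the local plumbing coordinates near $\overline{M}_{g,n}$, rather than reprove it; the content then reduces to matching the combinatorial data of the further contraction with the combinatorial data of the boundary chart. The other two parts are essentially bookkeeping once Lemma \ref{lem:irre-and-dim} and the cone decomposition (\ref{eqn:decomposition}) are in hand.
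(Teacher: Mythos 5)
Your proposal is correct and follows essentially the same route as the paper: part (1) by quoting Lemma \ref{lem:irre-and-dim} and the dimension of the cone $\mathcal{C}(G',w')$, the tropical inclusion in (2) via the cone decomposition behind Proposition \ref{prop:decomposition}, and the Riemann-surface inclusion via a one-parameter family interpolating between the two strata. The only cosmetic difference is that you describe this family as shrinking the extra geodesics in Fenchel--Nielsen coordinates, whereas the paper runs the same family in the other direction by smoothing the extra nodes through the local model $z_{1}z_{2}=t$; these are two descriptions of the same plumbing degeneration, as you yourself note.
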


\begin{proof}
That $\dim M^{\mathrm{rs}}(G'_{\preceq G},w')=|E(G')|$ follows from Lemma $\ref{lem:irre-and-dim}$. The dimension of 
$M^{\mathrm{tr}}(G',w')$ comes from the dimension of the cone $\mathcal{C}(G',w')$ which is equal to $|E(G')|$. 
This completes the proof of the statement (1).

For (2), suppose $(G'',w'')\preceq (G',w')$, then we have $M^{\mathrm{tr}}(G'',w'')\subset  \overline{M^{\mathrm{tr}}(G',w')}$ 
by Proposition $\ref{prop:decomposition}$. Now we will show that we also have 
$M^{\mathrm{rs}}(G''_{\preceq G},w'')\subset  \overline{M^{\mathrm{rs}}(G'_{\preceq G},w')}$.

Let $(C,\underline{p})\in M^{\mathrm{rs}}(G''_{\preceq G},w'')$ be a pointed Riemann surface. We know that a neighborhood $U$ of 
each node on $C$ can be locally modeled as $((\mathbb{C}^{2},0),z_{1}z_{2}=0)$. We can take the neighborhood $U$ as small as 
we like, then a deformation of the node is modeled by a map germ 
$\iota:(\mathbb{C}^{2},0)\rightarrow (\mathbb{C},0);(z_{1},z_{2})\mapsto z_{1}z_{2}$, regarding the fiber $U_{t}:=\iota^{-1}(t)$ 
for small $t\neq 0$ as a `deformed $U$'.

Now let $(G'',w'')=(G'_{/Q},w'_{/Q})$. We can deform all the nodes corresponding to the edges in $Q$ at one time, so that 
we have a family of pointed Riemann surfaces supported on 
$(G'_{\preceq G},w')$ specializing to $(C,\underline{p})$. Namely, there exists such a family, $\pi:\mathscr{C}\rightarrow B$, 
over a one dimensional complex base manifold $(B,o)$, 
with $n$ pairwise disjoint holomorphic sections $(\sigma_{i}:B\rightarrow \mathscr{C})$ indicating the marked points, 
such that the fiber 
$(C_{b}:=\pi^{-1}(b),\underline{\sigma}(b))$ for every $b\neq o$ can be tropicalized to $(G'_{\preceq G},w')$ and 
the central fiber $(C_{o},\underline{\sigma}(o))=(C,\underline{p})$.
Hence we have $M^{\mathrm{rs}}(G''_{\preceq G},w'')\subset  \overline{M^{\mathrm{rs}}(G'_{\preceq G},w')}$.
\end{proof}

This theorem shows that there exists an order-preserving partial correspondence between the stratification of our moduli space 
$\overline{M^{\mathrm{tr}}(G,\underline{0})}$ and $\overline{M}_{g,n}$, not in a bijective manner though.

\subsection{Comparing with the moduli space of tropical curves}

For the moduli space of tropical curves, we only consider stable tropical curves.

\begin{definition}
A \emph{stable $n$-pointed weighted tropical curve of genus $g$} is a triple $(G,w,\ell)$ where $(G,w)$ is a 
stable weighted graph with a length function $\ell:E(G)\cup L(G)\rightarrow \mathbb{R}_{>0}\cup\{\infty\}$ 
such that $\ell(x)=\infty$ if and only if $x\in L(G)$.
\end{definition}

One readily notice that our tropicalizations for nodal Riemann surfaces with $2-2g-n<0$ are always stable 
tropical curves. So we will omit ``stable'' in what follows if no confusion would arise.

In order to construct the moduli space of tropical curves, in \cite{Caporaso-2013}, Caporaso first constructed the moduli 
space of tropical curves with fixed combinatorial type: $M^{\mathrm{trop}}(G,w)^{+}:=\mathcal{C}(G,w)^{+}/\sim$, where 
$p_{1}\sim p_{2}$ if the associated weighted metric graphs $(G_{1},w_{1},\ell_{1})$ and $(G_{2},w_{2},\ell_{2})$ are isomorphic to 
each other. Then the moduli space of $n$-pointed genus $g$ tropical curves is defined as 
\[
M_{g,n}^{\mathrm{trop}}:=\Big(\bigsqcup_{\text{$G$ trivalent, genus $g$, $n$ leaves}}M^{\mathrm{trop}}(G,\underline{0})^{+}\Big)/\cong
\]
where $\cong$ denotes the isomorphism of tropical curves. Finally this space can be compactified to 
\[
\overline{M_{g,n}^{\mathrm{trop}}}:=\Big(\bigsqcup_{\text{$G$ trivalent, genus $g$, $n$ 
		 leaves}}\overline{M^{\mathrm{trop}}(G,\underline{0})^{+}}\Big)/\cong
\]
by allowing the edge-length to go to infinity (so that the tropical curve becomes a so-called extended tropical curve), 
where $\cong$ denotes the isomorphism of extended tropical curves.

It can be shown that the topological space $\overline{M_{g,n}^{\mathrm{trop}}}$ is compact and Hausdorff and of pure dimension 
$3g-3+n$. Moreover, it is connected through codimension one, which is a typical property of a tropical variety, although 
$\overline{M_{g,n}^{\mathrm{trop}}}$ is not a tropical variety in general.

Without surprise, one may notice that the construction for our moduli space of tropicalizations 
$\overline{M^{\mathrm{tr}}(G,\underline{0})}$
is also largely based on the techniques of the moduli of weighted metric graphs. However, there are still at least two main 
differences between 
$\overline{M^{\mathrm{tr}}(G,\underline{0})}$ and $\overline{M_{g,n}^{\mathrm{trop}}}$.

(1) The topology is different: For $\overline{M^{\mathrm{tr}}(G,\underline{0})}$ 
we also allow the edge-length to go to infinity, but we identify the edge-length $\infty$ to $0$ 
so as to get a topological space $(\mathbb{R}_{\geq 0}\cup\{\infty\})/\sim$, homeomorphic to $\mathbb{S}^{1}$, 
for each coordinate to start with. 
That is because we want to regard our graphs as a tropicalization of some connected genus $g$ Riemann surface. When the 
length of some edge	goes to infinity, the Riemann surface supported on it becomes the normalization of 
the nodal Riemann surface supported on the graph 
whose corresponding edge-length goes to zero. Thus the Riemann surface becomes disconnected or of less genus, which 
is not the one we want to tropicalize. 
	
While for $\overline{M_{g,n}^{\mathrm{trop}}}$, we use the topological space $\mathbb{R}_{\geq 0}\cup \{\infty\}$, 
endowed with the subspace topology of the one-point compactification $\mathbb{R}\cup \{\infty\}$ of $\mathbb{R}$, 
for each coordinate to start with.

(2) The ``irreducibility'' is different: 
the regular tropicalization for a smooth $n$-pointed Riemann surface is not unique, and on the other hand, 
a weighted graph may support Riemann surfaces of distinct topological type if they are contracted from different 
regular tropicalizations, so we fix a regular tropicalization $(G,\underline{0})$ for constructing our space 
$\overline{M^{\mathrm{tr}}(G,\underline{0})}$. 

While for $\overline{M_{g,n}^{\mathrm{trop}}}$, it is glued by the moduli spaces 
of all the possible combinatorial types $\overline{M^{\mathrm{trop}}(G,\underline{0})^{+}}$ along the codimension one strata, so that 
each $\overline{M^{\mathrm{trop}}(G,\underline{0})^{+}}$ plays a role like an ``irreducible component'' in 
$\overline{M_{g,n}^{\mathrm{trop}}}$. Hence in this sense, our $\overline{M^{\mathrm{tr}}(G,\underline{0})}$ 
is just ``irreducible''.

\bibliographystyle{alpha}
\bibliography{tropbibfile}

\begin{thebibliography}{BMV11}

\bibitem[Abr13]{Abramovich-2013}
Dan Abramovich.
\newblock Moduli of algebraic and tropical curves.
\newblock In {\em Colloquium De Giorgi 2010-2012}, volume~4 of {\em Colloquia},
  pages 35--47. Ed. Norm., Pisa, 2013.

\bibitem[ACG11]{Arbarello-Cornalba-Griffiths}
Enrico Arbarello, Maurizio Cornalba, and Phillip~A. Griffiths.
\newblock {\em Geometry of algebraic curves. Volume II.}
\newblock Number 268 in Grundlehren der Mathematischen Wissenschaften.
  Springer, Heidelberg, 2011.

\bibitem[ACP15]{Abramovich-Caporaso-Payne}
Dan Abramovich, Lucia Caporaso, and Sam Payne.
\newblock The tropicalization of the moduli space of curves.
\newblock {\em Ann. Sci. \'Ec. Norm. Sup\'er. (4)}, 48(4):765--809, 2015.

\bibitem[BMV11]{Brannetti-Melo-Viviani}
Silvia Brannetti, Margarida Melo, and Filippo Viviani.
\newblock On the tropical torelli map.
\newblock {\em Adv. Math.}, 226(3):2546--2586, 2011.

\bibitem[Cap13]{Caporaso-2013}
Lucia Caporaso.
\newblock Algebraic and tropical curves: comparing their moduli spaces.
\newblock In {\em Handbook of moduli. Vol. I}, volume~24 of {\em Adv. Lect.
  Math. (ALM)}, pages 119--160. Int. Press, Somerville, MA, 2013.

\bibitem[Cap18]{Caporaso-2018}
Lucia Caporaso.
\newblock Tropical methods in the moduli theory of algebraic curves.
\newblock In {\em Algebraic geometry: Salt Lake City 2015}, volume 97.2 of {\em
  Proc. Sympos. Pure Math.}, pages 103--138. Amer. Math. Soc., Providence, RI,
  2018.

\bibitem[Loo17]{Looijenga-2017}
Eduard Looijenga.
\newblock Moduli spaces of riemann surfaces.
\newblock In {\em Introduction to moduli spaces of Riemann surfaces and
  tropical curves}, volume~14 of {\em Surv. Mod. Math.}, pages 1--87. Int.
  Press, Somerville, MA, 2017.

\end{thebibliography}
\end{document}